\documentclass[11pt]{amsart}
\usepackage{a4wide,color,amssymb}
\usepackage[all]{xy}
\usepackage{bbm,soul}

\newcommand{\K}{\mathcal K}

\newcommand{\C}{\mathcal C}
\newcommand{\F}{\mathcal F}

\newcommand{\Fil}{\varphi}
\newcommand{\IN}{\mathbb N}
\newcommand{\IC}{\mathbb C}
\newcommand{\IT}{\mathbb T}
\newcommand{\Ra}{\Rightarrow}

\newcommand{\w}{\omega}

\newcommand{\IZ}{\mathbb Z}
\newcommand{\Zar}{\mathfrak Z}

\newcommand{\pr}{\mathrm{pr}}
\newcommand{\Zeta}{\mathfrak Z}

\newcommand{\two}{\mathbbm 2}

\newtheorem{theorem}{Theorem}[section]
\newtheorem{problem}[theorem]{Problem}
\newtheorem{proposition}[theorem]{Proposition}
\newtheorem{lemma}[theorem]{Lemma}
\newtheorem{corollary}[theorem]{Corollary}
\newtheorem{claim}[theorem]{Claim}
\newtheorem{question}[theorem]{Question}
\theoremstyle{definition}
\newtheorem{definition}[theorem]{Definition}
\newtheorem{remark}[theorem]{Remark}
\newtheorem{example}[theorem]{Example}
\newtheorem*{definition*}{Definition}

\title{Categorically closed countable semigroups}
\author{Taras Banakh and Serhii Bardyla}
\address{T.~Banakh: Ivan Franko National University of Lviv (Ukraine) and Jan Kochanowski University in Kielce (Poland)}
\email{t.o.banakh@gmail.com}
\address{S.~Bardyla: University of Vienna, Institute of Mathematics, Kurt G\"{o}del Research Center (Austria)}
\thanks{The second author was supported by the Austrian Science Fund FWF (Grant  M 2967).}
\email{sbardyla@yahoo.com}
\makeatletter
\@namedef{subjclassname@2020}{%
  \textup{2020} Mathematics Subject Classification}
\makeatother

\subjclass[2020]{22A15, 20M18}
\keywords{Categorically closed semigroup, polybounded semigroup, nontopologizable~semi\-group}

\begin{document}
\begin{abstract}
In this paper we establish a connection between categorical closedness and nontopologizability of semigroups. In particular, for the class $\mathsf{T_{\!1}S}$ of $T_1$ topological semigroups we prove that a countable semigroup $X$ with finite-to-one shifts is injectively $\mathsf{T_{\!1}S}$-closed if and only if $X$ is $\mathsf{T_{\!1}S}$-discrete in the sense that every $T_1$ semigroup topology on $X$ is discrete. Moreover, a countable cancellative semigroup $X$ is absolutely $\mathsf {T_{\!1}S}$-closed if and only if every homomorphic image of $X$ is $\mathsf{T_{\!1}S}$-discrete. Also, we introduce and investigate a new notion of a polybounded semigroup. It is proved that a countable semigroup $X$ with finite-to-one shifts is polybounded if and only if $X$ is $\mathsf{T_{\!1}S}$-closed if and only if $X$ is $\mathsf{T_{\!z}S}$-closed, where $\mathsf{T_{\!z}S}$ is the class of Tychonoff zero-dimensional topological semigroups. We show that polybounded cancellative semigroups are groups, and  polybounded $T_1$ paratopological groups 
are topological groups.
\end{abstract}
\maketitle

\section{Introduction}

In many cases, completeness properties of various objects of General Topology and Topolo\-gical Algebra can be characterized externally as closedness in ambient objects. For example, a metric space $X$ is complete if and only if $X$ is closed in any metric space containing $X$ as a subspace. A uniform space $X$ is complete if and only if $X$ is closed in any uniform space containing $X$ as a uniform subspace. A topological group $G$ is Ra\u\i kov complete  if and only if it is closed in any topological group containing $G$ as a subgroup.


This motivates the following general definition.

\begin{definition} Let $\C$ be a class of topological semigroups. A topological semigroup $X$ is defined to be
\begin{itemize}
\item {\em $\C$-closed} if for any isomorphic topological embedding $e{:}\hbox{ }X\to Y$ into a topological semigroup $Y\in\C$ the image $e[X]$ is closed in $Y$;
\item {\em injectively $\C$-closed} if for any injective continuous homomorphism $i{:}\hbox{ }X\to Y$ to a topological semigroup $Y\in\C$ the image $i[X]$ is closed in $Y$;
\item {\em absolutely $\C$-closed} if for any continuous homomorphism $h{:}\hbox{ }X\to Y$ to a topological semigroup $Y\in\C$ the image $h[X]$ is closed in $Y$.
\end{itemize}
\end{definition}

Clearly, for an{\color{blue}y} topological semigroup the following implications hold:
$$\mbox{absolutely $\C$-closed $\Ra$ injectively $\C$-closed $\Ra$ $\C$-closed}.$$

Topological semigroups satisfying the above completeness-like properties are called {\em categorically closed}. Categorically closed topological groups were investigated in
~\cite{AC,AC1,BL,Ban,Ban1,BGR,DU,G,JS, Z1,KOO,L,Z2}. 
Corresponding notions of completeness in Category Theory were investigated in~\cite{CDT,Cbook1,Er,FG,G1,GH,LW}. In particular, closure operators in different categories were studied in~\cite{BGH, Cbook, CG1, CG2, DT, Dbook, GS, T, Za}. Categorically closed topological semilattices were investigated
in~\cite{BBm, BBR, GutikPagonRepovs2010, GutikRepovs2008, Stepp75}.
For more information about complete topological semilattices and pospaces we refer to the recent survey of the authors~\cite{BBc}.

In this paper we shall be mainly interested in the categorical closedness of discrete topological semigroups. Since each semigroup can be endowed with the discrete topology and thus identified with a discrete topological semigroup, it is natural to define (injectively or absolutely) $\C$-closed semigroups as  follows.

 \begin{definition} A semigroup $X$ is defined to be ({\em injectively or absolutely}) {\em $\C$-closed} if so is the topological semigroup $X$ endowed with the discrete topology.
\end{definition}

The results of this paper show that the categorical closedness of semigroups is related to their topologizability properties. Many such properties can be defined as follows.


\begin{definition} Let $\C$ be a class of topological semigroups.  A semigroup $X$ is called
\begin{itemize}
\item {\em $\C$-discrete} (or else {\em $\C$-nontopologizable}) if for every injective homomorphism $i:X\to Y$ to a topological semigroup $Y\in\C$, the image $i[X]$ is a discrete subspace of $Y$;
\item {\em $\C$-topologizable} if $X$ is not $\C$-discrete;
\item {\em projectively $\C$-discrete} if for every homomorphism $h:X\to Y$ to a topological semigroup $Y\in\C$, the image $h[X]$ is a discrete subspace of $Y$.
\end{itemize}
\end{definition}
If the class $\C$ is closed under taking topological subsemigroups, then a semigroup is $\C$-topologizable if and only if it is algebraically isomorphic to a non-discrete topological semigroup in the class $\C$. Topologizable and nontopologizable semigroups are actively studied in Topological Algebra since the seminal paper of Markov \cite{Mar}, see \cite{BPS, DI, Hesse, KOO, Kotov, Malcev, Shelah, Taimanov78, Tro, Ztop}.
\smallskip

For any congruence $\approx$ on a semigroup $X$, the quotient set $X/_\approx$ has a unique structure of a semigroup such that the quotient map $X\to X/_\approx$ is a homomorphism. The semigroup $X/_\approx$ is called a {\em quotient semigroup} of $X$. A subset $I$ of a semigroup $X$ is called an {\em ideal} if for any $x\in I$ and $y\in X$ the elements $xy$ and $yx$ belong to $I$.
Every ideal $I$ determines the congruence $(I\times I)\cup\{(x,y)\in X\times X:x=y\}$ on $X$. The quotient semigroup of $X$ by this congruence is denoted by $X/I$. The semigroup $X$ can be identified with the quotient semigroup $X/\emptyset$.

A quotient semigroup of a $\C$-closed semigroup is not necessarily $\C$-closed (see Example 1.8 from~\cite{BB}). This fact motivates the following definition. 

\begin{definition}
A semigroup $X$ is defined to be
\begin{itemize}
\item {\em projectively $\C$-closed} if for every congruence $\approx$ on $X$ the quotient semigroup $X/_\approx$ is $\C$-closed;
\item {\em ideally $\C$-closed} if for every ideal $I\subseteq X$ the quotient semigroup $X/I$ is $\C$-closed.
\end{itemize}
\end{definition}

For any semigroup  we have the implications:
$$
\xymatrix{
\mbox{absolutely $\C$-closed}\ar@{=>}[d]\ar@{=>}[r]&\mbox{projectively $\C$-closed}\ar@{=>}[r]&\mbox{ideally $\C$-closed}\ar@{=>}[d]\\
\mbox{injectively $\C$-closed}\ar@{=>}[rr]&&\mbox{$\C$-closed}.
}
$$



For a semigroup $X$, its
\begin{itemize}
\item  {\em $0$-extension} is the semigroup $X^0=\{0\}\cup X$ where $0\notin X$ is any element such that $0x=0=x0$ for all $x\in X^0$;
\item {\em $1$-extension} is the semigroup $X^1=\{1\}\cup X$ where $1\notin X$ is any element such that $1x=x=x1$ for all $x\in X^1$.
\end{itemize}

A topology $\tau$ on a semigroup $X$ is called a {\em semigroup topology} if the binary operation of $X$ is continuous with respect to the topology $\tau$.

\begin{definition}
A semigroup $X$ is called {\em zero-closed} if $X$ is closed in $(X^0,\tau)$ for any $T_1$ semigroup topology $\tau$ on $X^0$.
\end{definition}

For $i\in\{0,1,2,3,3\frac12\}$ we denote by $\mathsf T_{\!i}\mathsf S$ the class of topological semigroups satisfying the separation axiom $T_i$ (see~\cite[\S1.5]{Eng} for more details). The class of all zero-dimensional topological semigroups (i.e., $T_1$ topological semigroups possessing a base consisting of clopen sets) is denoted by $\mathsf{T_{\!z}S}$.  
Note that any zero-dimensional space is Tychonoff and any $T_1$ space with a unique non-isolated point is zero-dimensional.
This yields the following chain of implications holding for every semigroup:
$$
\mbox{$\mathsf{T_{\!1}S}$-closed}\Ra\mbox{$\mathsf{T_{\!2}S}$-closed}\Ra\mbox{$\mathsf{T_{\!3}S}$-closed}\Ra\mbox
{$\mathsf{T_{\!3\frac12}S}$-closed}\Ra\mbox{$\mathsf{T_zS}$-closed}\Ra\mbox{zero-closed}.$$



By a {\em semigroup polynomial} on a semigroup $X$ we understand a function $f{:}\hbox{ }X\to X$ of the form $f(x)=a_0xa_1\cdots xa_n$ for some elements $a_0,\dots,a_n\in X^1$ where the number $n\ge 1$ is called a {\em degree} of $f$. 
Note that every semigroup polynomial $f$ on a topological semigroup $X$ is continuous.

\begin{definition}\label{d:polybounded} A subset $A$ of a semigroup $X$ is called {\em polybounded in} $X$ if $$A\subseteq \bigcup_{p\in P}\bigcup_{b\in B}\{x\in X:p(x)=b\}$$for a finite set $B\subseteq X$ and a finite set $P$ of semigroup polynomials on $X$.

A semigroup $X$ is called {\em polybounded} if $X$ is polybounded in $X$.
\end{definition}

Polybounded semigroups will be studied in Section~\ref{s:poly}. In Section~\ref{s:poly-shift} we shall study polybounded semigroups with finite-to-one shifts.

\begin{definition}
A semigroup $X$ is defined to have {\em finite-to-one shifts} if for any $a,b\in X$ the sets $\{x\in X:ax=b\}$ and $\{x\in X:xa=b\}$ are finite.
\end{definition}

The class of semigroups with finite-to-one shifts includes all groups and, more generally, all cancellative semigroups. Let us recall that a semigroup $X$ is {\em cancellative} if for any $a,b\in X^1$ the two-sided shift $s_{a,b}:X\to X$, $s_{a,b}:x\mapsto axb$, is injective.

The following description of the structure of polybounded cancellative semigroups will be proved in Section~\ref{s:main}.

\begin{theorem}\label{t:group} Every nonempty polybounded cancellative semigroup is a group.
\end{theorem}

One of the main results of this paper is the following characterization of $\C$-closed countable semigroups with finite-to-one shifts.

\begin{theorem}\label{char} For every countable semigroup $X$ with finite-to-one shifts, the following conditions are equivalent:
\begin{enumerate}
\item $X$ is $\mathsf{T_{\!1}S}$-closed;
\item $X$ is $\mathsf{T_{\!z}S}$-closed;
\item $X$ is zero-closed;
\item $X$ is polybounded.
\end{enumerate}
\end{theorem}

By Proposition~\ref{qd}, a homomorphic image of a polybounded semigroup is polybounded. Since any homomorphic image of a group is a group, Theorems~\ref{t:group} and \ref{char}  imply the following characterization.

\begin{corollary}\label{c:ccs}
For a countable cancellative semigroup $X$ the following conditions are equivalent:
\begin{enumerate}
\item $X$ is polybounded;
\item $X$ is zero-closed;
\item $X$ is  $\mathsf{T_{\!1}S}$-closed;
\item $X$ is projectively $\mathsf{T_{\!1}S}$-closed.
\end{enumerate}
\end{corollary}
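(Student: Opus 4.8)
The plan is to deduce everything from Theorem~\ref{char} together with Proposition~\ref{qd}, exploiting the fact that a cancellative semigroup automatically has finite-to-one shifts. Since $X$ is countable and cancellative, Theorem~\ref{char} applies directly and yields the equivalence of conditions (1), (2) and (3) of the corollary: these are precisely conditions (4), (3) and (1) of that theorem, respectively. Thus the only genuine work is to splice in the new condition (4), projective $\mathsf{T_{\!1}S}$-closedness.

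First I would dispatch the easy implication (4)$\Ra$(3). Taking the trivial congruence (equality) on $X$ gives $X/_\approx=X$, so if every quotient $X/_\approx$ is $\mathsf{T_{\!1}S}$-closed, then in particular $X$ itself is $\mathsf{T_{\!1}S}$-closed. Equivalently, this is the implication projectively $\C$-closed $\Ra$ $\C$-closed recorded in the diagram following the definition of projective closedness.

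The substantive direction is (1)$\Ra$(4). Assuming $X$ is polybounded, the ``moreover'' clause of Theorem~\ref{char} tells us that the cancellative semigroup $X$ is in fact a \emph{group}. Now fix an arbitrary congruence $\approx$ on $X$. The quotient $X/_\approx$ is a homomorphic image of the group $X$, hence is itself a group; in particular its shifts are bijective, so $X/_\approx$ has finite-to-one shifts, and it is plainly countable. By Proposition~\ref{qd}, the quotient $X/_\approx$, being a homomorphic image of the polybounded semigroup $X$, is again polybounded. Applying the implication polybounded $\Ra$ $\mathsf{T_{\!1}S}$-closed of Theorem~\ref{char} to the countable semigroup $X/_\approx$ with finite-to-one shifts, we conclude that $X/_\approx$ is $\mathsf{T_{\!1}S}$-closed. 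As $\approx$ was arbitrary, $X$ is projectively $\mathsf{T_{\!1}S}$-closed.

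The only delicate point, and the step I would flag as the potential obstacle, is verifying that Theorem~\ref{char} is genuinely \emph{applicable to the quotient} $X/_\approx$: that theorem requires finite-to-one shifts, a hypothesis which need not be inherited by arbitrary homomorphic images of a general cancellative semigroup. It is exactly the upgrade ``cancellative ${}+{}$ polybounded $\Ra$ group'' supplied by Theorem~\ref{char} that removes this difficulty, since homomorphic images of groups are groups and groups trivially have finite-to-one shifts. Everything else reduces to a bookkeeping match between the labels of the corollary and those of Theorem~\ref{char}.
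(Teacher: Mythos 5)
Your proof is correct and follows essentially the same route as the paper: the paper's own justification is precisely that Proposition~\ref{qd} makes homomorphic images of polybounded semigroups polybounded, that the ``moreover'' clause of Theorem~\ref{char} turns the cancellative polybounded $X$ into a group (so every quotient is a group, hence has finite-to-one shifts), and that Theorem~\ref{char} then applies to each countable quotient. The ``delicate point'' you flag --- that finite-to-one shifts need not pass to quotients of a general cancellative semigroup, and that the group structure is what rescues the argument --- is exactly the observation the paper makes.
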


Corollary~\ref{c:ccs} is specific for {\em countable} cancellative semigroups and does not generalize to groups of arbitrary cardinality: by \cite{BanS}, for every infinite cardinal $\kappa$ with $\kappa^+=2^\kappa$, there exists a  simple group of cardinality $\kappa^+$ which is  absolutely $\mathsf{T_{\!1}S}$-closed but not polybounded. A group $X$ is {\em simple} if every normal subgroup of $X$ is either trivial or equals $X$.
\smallskip

On the other hand, the equivalence $(3)\Leftrightarrow(4)$ in Corollary~\ref{c:ccs} does hold for arbitrary groups and more generally for arbitrary semigroups whose all homomorphic images have finite-to-one shifts.

\begin{theorem}\label{t:C=pC} Let $X$ be a semigroup such that every homomorphic image of $X$ has finite-to-one shifts. Let $\mathsf i\in\{1,2,3,3\frac12,\mathsf z\}$. The semigroup $X$ is $\mathsf{T_{\!i}S}$-closed if and only if $X$ is projectively $\mathsf{T_{\!i}S}$-closed.
\end{theorem}

The following corollary of Theorem~\ref{t:C=pC} answers the ``group'' part of Question 9.2 in \cite{BB}.

\begin{corollary}\label{c:C=pC}  Let $\mathsf i\in\{1,2,3,3\frac12,\mathsf z\}$. A group $X$ is $\mathsf{T_{\!i}S}$-closed if and only if $X$ is projectively $\mathsf{T_{\!i}S}$-closed.
\end{corollary}


Next, we characterize the injective (and absolute) $\mathsf{T_{\!1}S}$-closedness of (cancellative) countable semigroups with finite-to-one shifts in terms of their (projective) $\mathsf{T_{\!1}S}$-discreteness.

\begin{theorem}\label{main2} A countable semigroup $X$ with finite-to-one shifts is injectively $\mathsf{T_{\!1}S}$-closed if and only if $X$ is $\mathsf{T_{\!1}S}$-discrete.
\end{theorem}

\begin{theorem}\label{tag}
 A countable cancellative semigroup $X$ is  absolutely $\mathsf{T_{\!1}S}$-closed if and only if $X$ is projectively $\mathsf{T_{\!1}S}$-discrete.
\end{theorem}

In fact, the ``only if'' parts of the characterization Theorems~\ref{main2} and \ref{tag} hold for an arbitrary semigroup. This motivates the following problem.

\begin{problem}\label{prob:d=>c} Is every $\mathsf{T_{\!1}S}$-discrete group $\mathsf{T_{\!1}S}$-closed?
\end{problem}

The answer to Problem~\ref{prob:d=>c} is affirmative for commutative groups as follows from our next characterization.



\begin{theorem}\label{t:aC-cgrp} Let $\C$ be a class of topological semigroups such that $\mathsf{T_{\!z}S}\cap\mathsf{TG}\subseteq\C\subseteq\mathsf{T_{\!1}S}$. For a commutative cancellative semigroup $X$ the following conditions are equivalent:
\begin{enumerate}
\item $X$ is injectively $\C$-closed;
\item $X$ is absolutely $\C$-closed;
\item $X$ is $\C$-discrete;
\item $X$ is finite.
\end{enumerate}
\end{theorem}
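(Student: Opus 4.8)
The implication (3)$\Ra$(2) is immediate: if $X$ is finite and $h\colon X\to Y$ is a continuous homomorphism into some $Y\in\C\subseteq\mathsf{T_{\!1}S}$, then $h[X]$ is a finite, hence closed, subset of the $T_1$-space $Y$, so $X$ is absolutely $\C$-closed. The implication (2)$\Ra$(1) is the general implication ``absolutely $\C$-closed $\Ra$ injectively $\C$-closed'' recorded after the first definition. The whole content of the theorem therefore lies in (1)$\Ra$(3), which I would establish in contrapositive form: \emph{every infinite commutative group $X$ fails to be injectively $\C$-closed.} Since the minimal admissible class $\C_0:=\mathsf{T_{\!z}S}\cap\mathsf{TG}$ is contained in every admissible $\C$, any injective homomorphism of $X$ into a member of $\C_0$ with non-closed image already witnesses this failure for all $\C$. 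Thus the problem reduces to a single statement: \emph{every infinite commutative group embeds as a non-closed subgroup into some zero-dimensional Hausdorff topological group.}

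The countable case admits a clean direct proof resting on the fact that every countable Hausdorff topological group, being a countable regular space, is zero-dimensional. Given a countable infinite commutative group $X$, I would first put on it a non-discrete precompact metrizable group topology: the characters $X\to\mathbb T$ separate points, countably many of them suffice, and the resulting injection into $\mathbb T^\omega$ carries a metrizable precompact topology which is non-discrete because an infinite precompact group cannot be discrete. A countable non-discrete metrizable group is not Ra\u\i kov complete (by the Baire theorem a countable completely metrizable group is discrete), so $X$ is a proper dense subgroup of its completion; fixing a point $z$ of the completion outside $X$ and passing to the countable subgroup $Y:=\langle X\cup\{z\}\rangle$ yields a countable, hence zero-dimensional, Hausdorff topological group in which $X$ is dense and proper, and so not closed. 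It is essential here to work inside a countable overgroup rather than with the completion itself, which may be connected (for $X=\mathbb Z(p^\infty)$ it is the circle).

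For uncountable $X$ I would reduce to this case through two constructions. If the divisible part of $X$ is nontrivial, it contains a countable divisible subgroup $G\cong\mathbb Q$ or $G\cong\mathbb Z(p^\infty)$, which, being divisible, is a direct summand: $X=G\oplus C$. Equipping $G$ with the countable topology above (with countable overgroup $Y_0\supseteq G$) and $C$ with the discrete topology turns $X=G\oplus C$ into a topological sum of copies of $G$, hence zero-dimensional, sitting densely and properly inside the zero-dimensional group $Y:=Y_0\oplus C$. If instead $X$ is reduced and admits a non-discrete Hausdorff \emph{linear} topology (a base of open subgroups at $0$), then that topology and its completion are zero-dimensional and, $X$ being incomplete, exhibit $X$ as a non-closed subgroup; this covers in particular all residually finite $X$, via the dense embedding into the profinite completion.

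The main obstacle is the one remaining class: uncountable reduced groups carrying \emph{no} non-discrete Hausdorff linear topology --- for instance those with nonzero first Ulm subgroup $\bigcap_{n}nX\ne0$, where divisibility-type phenomena block the profinite and $p$-adic dense embeddings while the automatic zero-dimensionality of countable groups is unavailable. For these I would appeal to the structure theory of abelian groups together with the main result of \cite{Ban2} (see also \cite[Theorem~3]{Ban}) to construct the required zero-dimensional non-closed embedding, thereby completing the reduction and with it the proof of (1)$\Ra$(3).
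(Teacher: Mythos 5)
Your implications (3)$\Ra$(2)$\Ra$(1) are fine, and your countable case of (1)$\Ra$(3) is a complete, correct argument: countably many characters give a non-discrete precompact metrizable group topology, Baire gives incompleteness, and passing to a countable overgroup inside the Ra\u\i kov completion yields zero-dimensionality for free. The reduction when the divisible part is nontrivial is also sound. The proposal breaks, however, at your ``Case B''. The clause ``$X$ being incomplete'' is an assumption, not a consequence: a reduced group can be \emph{complete} in its linear topology, and then your construction produces a closed image. Concretely, take $X=\mathbb Z_p$, the additive group of $p$-adic integers. It is reduced and residually finite, and its $\mathbb Z$-adic topology coincides with the $p$-adic one (since $n\mathbb Z_p=p^{v_p(n)}\mathbb Z_p$), a non-discrete Hausdorff linear topology in which $X$ is complete. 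Worse, every finite-index subgroup of $\mathbb Z_p$ contains some $p^k\mathbb Z_p$, so the profinite completion of the \emph{abstract} group $\mathbb Z_p$ is the inverse limit of the groups $\mathbb Z_p/p^k\mathbb Z_p$, i.e.\ $\mathbb Z_p$ itself: the ``dense embedding into the profinite completion'' is surjective, and its image is trivially closed. Hence your claim that Case B ``covers in particular all residually finite $X$'' is false, and $\mathbb Z_p$ --- which by the theorem is \emph{not} injectively $\C$-closed --- is left without any witness in your scheme.

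The second gap is that the remaining case, uncountable reduced groups not rescued by a linear topology, is precisely where the difficulty lies, and there you give no argument at all, deferring to ``structure theory together with the main result of \cite{Ban2}'' without indicating how that result produces the required non-closed zero-dimensional embedding. For comparison: the paper itself offers no proof of this theorem either; it states only that the theorem ``follows from the main result of \cite{Ban2} (see also \cite[Theorem 3]{Ban})''. So your deferral is consistent with the paper's treatment, but it means your text is not a proof of (1)$\Ra$(3): what you have genuinely established beyond the citation is the countable case and the case of nontrivial divisible part, while uncountable reduced groups --- including $\mathbb Z_p$ above and reduced $p$-groups with nonzero first Ulm subgroup (which are not residually finite, since a homomorphism to a finite group must kill every element of infinite $p$-height) --- still rest entirely on the quoted external result, with your intermediate Case B actually asserting something false along the way.
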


Theorem~\ref{t:aC-cgrp} is specific for commutative cancellative semigroups. For non-commutative groups the situation is totally different. In the following theorem by a {\em bounded} semigroup we understand a semigroup $X$ for which there exists $n\in\IN$ such that the $n$-th power $x^n$ of any element $x\in X$ is an idempotent. 

\begin{theorem}\label{t:KOO} Every countable bounded group without elements of order $2$ is a subgroup of an absolutely $\mathsf{T_{\!1}S}$-closed countable bounded simple group.
\end{theorem}

\begin{proof} By Theorem 2.3 in \cite{KOO}, every countable bounded group $G$ without elements of order 2 is a subgroup of a countable simple bounded group $X$, which is $\mathsf{TG}$-discrete for the class $\mathsf{TG}$ of Hausdorff topological groups. Since each semigroup topology on a bounded group is a group topology, the $\mathsf{TG}$-discrete group $X$ is $\mathsf{T_{\!1}S}$-discrete. By Theorem~\ref{main2}, the group $X$ is injectively $\mathsf{T_{\!1}S}$-closed. To show that $X$ is absolutely $\mathsf{T_{\!1}S}$-closed, take any homomorphism $h:X\to Y$ to a $T_1$ topological semigroup $Y$. Let $e$ be the unique idempotent of $X$. Since $X$ is a group, the image $h[X]$ is a group and $H=h^{-1}(h(e))$ is a normal subgroup of $X$. Since the group $X$ is simple, $H=X$ or $H=\{e\}$. If $H=X$, then the image $h[X]$ is a singleton and hence $h[X]$ is closed in $Y$. If $H=\{e\}$, then the homomorphism $h:X\to Y$ is injective and $h[X]$ is closed in $Y$ by the injective $\mathsf{T_{\!1}S}$-closedness of $X$.
\end{proof}

The following counterpart of Theorem~\ref{t:KOO} for uncountable groups is proved in \cite{BanS}.

\begin{theorem} Let $\kappa$ be a cardinal such that $\kappa^+=2^\kappa$. Every group $H$ of cardinality $|H|\le\kappa$ is a subgroup of an absolutely $\mathsf{T_{\!1}S}$-closed $36$-Shelah simple group $G$ of cardinality $|G|=\kappa^+$.
\end{theorem}

A semigroup $X$ is called {\em $n$-Shelah} if $X=\{a_1a_2\cdots a_n:a_1,\dots,a_n\in A\}$ for any subset $A\subseteq X$ of cardinality $|A|=|X|$.

\smallskip

 Theorem~\ref{t:KOO} is applied in the following example  showing that   Corollary~\ref{c:ccs} and Theorem~\ref{t:C=pC} cannot be generalized to the class of all semigroups with finite-to-one shifts.

\begin{example} There exists a countable semigroup $X$ such that
\begin{enumerate}
\item $X$ has finite-to-one shifts;
\item $X$ is $\mathsf{T_{\!1}S}$-discrete but not projectively $\mathsf{T_{\!z}S}$-discrete;
\item $X$ is injectively $\mathsf{T_{\!1}S}$-closed but not absolutely $\mathsf{T_{\!z}S}$-closed.
\end{enumerate}
\end{example}

\begin{proof} Take any countable infinite bounded commutative group $A$ without elements of order 2.  By Theorem~\ref{t:KOO}, $A$ is a subgroup of an absolutely $\mathsf{T_{\!1}S}$-closed countable group $G$. Let $\two$ be the two-element semilattice $\{0,1\}$ endowed with the operation of minimum. We claim that the subsemigroup $X=(\{0\}\times G)\cup(\{1\}\times A)$ of $\two\times G$ has the required properties. It is easy to see that the semigroup $X$ has finite-to-one shifts. To see that $X$ is $\mathsf{T_{\!1}S}$-discrete, take any injective homomorphism $h:X\to Y$ to a $T_1$ topological semigroup $Y$. By Theorem~\ref{main2}, the injective  $\mathsf{T_{\!1}S}$-closedness of the group $G$ implies that $G$ is $\mathsf{T_{\!1}S}$-discrete and hence the image $h[\{0\}\times G]$ is a discrete subspace of $Y$. To see that $h[X]$ is discrete, take any $x\in X$ and let $e$ be the unique idempotent of the subgroup $\{0\}\times G$ of $X$. Since $h[\{0\}\times G]$ is a discrete subspace of $Y$, there exists an open set $U\subseteq Y$ such that $U\cap h[\{0\}\times G]=\{h(ex)\}$. By the continuity of shifts in the topological semigroup $Y$, the set $V=\{y\in h[X]:ey\in U\}=\{y\in h[X]:ey=ex\}$ is open in $h[X]$. It is easy to see that this set contains exactly two elements, one of which is $x$.  Since $Y$ is a $T_1$-space, the singleton $V\setminus \{x\}$ is closed in $Y$ and hence $\{x\}=V\setminus(V\setminus\{x\})$ is open in $h[X]$, witnessing that the space $h[X]$ is discrete and the semigroup $X$ is $\mathsf{T_{\!1}S}$-discrete. By Theorem~\ref{main2}, the semigroup $X$ is injectively $\mathsf{T_{\!1}S}$-closed.

To see that $X$ is not absolutely $\mathsf{T_{\!z}S}$-closed, consider the homomorphism $h:X\to A^0$, defined by
$$h(i,x)=\begin{cases}
x&\mbox{if $i=1$};\\
0&\mbox{if $i=0$};
\end{cases}
$$for any $(i,x)\in X$. By Theorem~\ref{t:aC-cgrp}, the infinite abelian group $A$ is not injectively $\mathsf{T_{\!z}S}$-closed, which implies that its $0$-extension $A^0$ is not injectively $\mathsf{T_{\!z}S}$-closed and the semigroup $X$ is not absolutely $\mathsf{T_{\!z}S}$-closed.  Also Theorem~\ref{t:aC-cgrp} ensures that the group $A$ is not $\mathsf{T_{\!z}S}$-discrete, which implies that its $0$-extension $A^0$ is not $\mathsf{T_{\!z}S}$-discrete and hence $X$ is not projectively $\mathsf{T_{\!z}S}$-discrete.
\end{proof}


\section{Auxiliary results}


In this paper we denote by $\w$ the set $\{0,1,2,\dots\}$ of nonnegative integers and by $\IN$ the set $\{1,2,\dots\}$ of positive integers. For a set $X$, we denote by $|X|$ the cardinality of $X$.

The following technical lemma yields a sufficient condition of non-zero-closedness and will be used in the proof of Theorem~\ref{t:semibound}.

\begin{lemma}\label{l0} Let $X$ be a semigroup and $\mathcal K$ be a countable family of infinite subsets of $X$ such that
\begin{enumerate}
\item for any $K,L\in\mathcal K$ there exists $M\in\mathcal K$ such that $KL\subseteq M$;
\item for any $a,b\in X^1$ and $K\in\mathcal K$ there exists $L\in\mathcal K$ such that $aKb\subseteq L$;
\item for any $K\in\mathcal K$ and $a,b,c\in X^1$ the set $\{x\in K:axb=c\}$ is finite;
\item for any $K,L\in\mathcal K$ and $c\in X$ the set $\{(x,y)\in K\times L:xy=c\}$ is finite.
\end{enumerate}
Then the topology $$\tau^0=\{V\subseteq X^0:0\in V\Ra \forall K\in\mathcal K\;(|K\setminus V|<\w)\}$$turns $X^0$ into a Hausdorff topological semigroup with a unique non-isolated point $0$.
\end{lemma}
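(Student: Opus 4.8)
The plan is to verify in turn that $\tau^0$ is a topology, that it is Hausdorff with $0$ as the unique non-isolated point, and that the multiplication of $X^0$ is continuous; only the last item, and within it continuity at the pair $(0,0)$, is substantial. First I would check that $\tau^0$ is a topology: since membership of $0$ is the only constraint, both $\emptyset$ and $X^0$ are open, an arbitrary union of open sets containing $0$ is cofinite on each $K$ (its complement in $K$ is contained in the complement of any one member), and for a finite intersection the complement in each $K$ is a finite union of the individual finite complements. Every singleton $\{x\}$ with $x\in X$ is open as it omits $0$, so each point of $X$ is isolated, whereas $\{0\}$ is not open because $K\setminus\{0\}=K$ is infinite for $K\in\mathcal K$; thus $0$ is the unique non-isolated point. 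For the Hausdorff property it then suffices to separate $0$ from an isolated point $x$, and $X^0\setminus\{x\}$ is an open neighbourhood of $0$ since its complement $\{x\}$ meets each $K$ in at most one point.

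The core is continuity of $m\colon X^0\times X^0\to X^0$ at a pair $(a,b)$ for a prescribed open neighbourhood $W$ of $ab$. When $a,b\in X$ the product $ab\in X$ is isolated and $\{a\}\times\{b\}$ is the required basic neighbourhood. When exactly one argument is $0$, say $(0,b)$ with $b\in X$, I take $V=\{b\}$ and seek a neighbourhood $U$ of $0$ with $Ub\subseteq W$; this reduces to showing that $\{u\in K:ub\notin W\}$ is finite for every $K\in\mathcal K$, after which $U=X^0\setminus\bigcup_{K}\{u\in K:ub\notin W\}$ works. Finiteness follows from conditions (2) and (3): by (2) we have $Kb\subseteq L$ for some $L\in\mathcal K$, so $ub\notin W$ forces $ub\in L\setminus W$, a finite set, and by (3) each fibre $\{u\in K:ub=c\}$ is finite. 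The case $(a,0)$ is symmetric, using left shifts.

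The main obstacle is continuity at $(0,0)$, where $ab=0$ and I must produce neighbourhoods $U,V$ of $0$ with $UV\subseteq W$; the difficulty is that $UV$ is a product of two infinite sets, so a single fibre estimate no longer suffices and a diagonal argument over the countable family $\mathcal K=\{K_n:n\in\w\}$ is needed. The first step records \emph{box-finiteness}: for all $i,j$ the set $\mathrm{Bad}(K_i,K_j)=\{(u,v)\in K_i\times K_j:uv\notin W\}$ is finite, since by (1) we have $K_iK_j\subseteq M$ for some $M\in\mathcal K$, whence a bad product lies in the finite set $M\setminus W$, and by (4) each fibre over a value $c\in M\setminus W$ is finite.

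The delicate step is to convert box-finiteness into a single pair of removal sets $A,B\subseteq X$ with $A\cap K_m$ and $B\cap K_m$ finite for all $m$, which together \emph{cover} the bad pairs in the sense that $u\in A$ or $v\in B$ whenever $uv\notin W$. I plan to do this by a min-index charging rule: for $x\in\bigcup\mathcal K$ put $n(x)=\min\{n:x\in K_n\}$, and charge a bad pair $(u,v)$ with both coordinates in $\bigcup\mathcal K$ to its first coordinate (adding $u$ to $A$) when $n(u)\ge n(v)$ and to its second coordinate (adding $v$ to $B$) otherwise, while a bad pair with a coordinate outside $\bigcup\mathcal K$ is charged to that free coordinate. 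The point of the rule is that if $u\in A\cap K_m$ then $n(u)\le m$ and its charged partner $v$ satisfies $n(v)\le n(u)\le m$, so $(u,v)$ lies in the finite set $\bigcup_{j\le m}\mathrm{Bad}(K_m,K_j)$; hence $A\cap K_m$ is finite, and symmetrically $B\cap K_m$ is finite, while the free-coordinate charges never enter any $K_m$. Setting $U=X^0\setminus A$ and $V=X^0\setminus B$ then gives neighbourhoods of $0$ with $UV\subseteq W$: any surviving pair $(u,v)\in(U\cap X)\times(V\cap X)$ is uncharged, hence not bad, so $uv\in W$, and every product involving $0$ equals $0\in W$. This establishes continuity at $(0,0)$ and completes the proof.
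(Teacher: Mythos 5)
Your proposal is correct in substance and follows the same overall strategy as the paper's proof: settle the easy cases, obtain continuity of the shifts at $0$ from conditions (2) and (3), reduce continuity at $(0,0)$ to finiteness of the sets of bad pairs on boxes $K_i\times K_j$ via conditions (1) and (4), and then diagonalize over the countable family. The difference lies in the diagonalization. The paper enumerates $\mathcal K=\{K_n\}_{n\in\omega}$, passes to the increasing sets $U_n=\bigcup_{i<n}K_i$, covers each bad set $\Pi_n=\{(x,y)\in U_n\times U_n: xy\notin V\}$ by $P_n\times P_n$ with $P_n$ finite, and builds a \emph{single symmetric} neighborhood $W=\{0\}\cup\bigcup_{n}\bigl((U_n\setminus U_{n-1})\setminus P_n\bigr)$ of $0$ satisfying $WW\subseteq V$, the key point being a max-index comparison of the layers containing the two factors. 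You instead keep the family as given and use a min-index charging rule to split the bad pairs into two removal sets $A,B$ that meet every $K_m$ in a finite set, yielding two asymmetric neighborhoods $U=X^0\setminus A$ and $V=X^0\setminus B$. Both arguments work; the paper's layered $W$ has the advantage of lying inside $\{0\}\cup\bigcup\mathcal K$, so pairs with a coordinate outside $\bigcup\mathcal K$ never arise, whereas your co-small neighborhoods force you to treat such coordinates separately, which your free-coordinate clause does correctly. You also verify explicitly that $\tau^0$ is a topology and is Hausdorff, which the paper leaves to the reader.

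One slip needs repair: in the case $(0,b)$ the set you display, $U=X^0\setminus\bigcup_{K\in\mathcal K}\{u\in K: ub\notin W\}$, need not satisfy $Ub\subseteq W$, because a point $u\in X\setminus\bigcup\mathcal K$ with $ub\notin W$ survives into $U$ (nothing in the hypotheses forces $X=\bigcup\mathcal K$). The correct choice is the full preimage $U=\{u\in X^0: ub\in W\}$; your finiteness claim $|\{u\in K: ub\notin W\}|<\omega$ for all $K\in\mathcal K$ is exactly what shows this $U$ is $\tau^0$-open, so the reduction you state is right and the rest of the argument goes through verbatim. It is worth noting that at $(0,0)$ you did handle coordinates outside $\bigcup\mathcal K$ correctly, precisely by the free-coordinate charging; the same care was needed in this simpler case.
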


\begin{proof}
Let $\{K_n\}_{n\in\w}$ be an enumeration of the countable family $\K$. For every $n\in\w$ let $U_n=\bigcup_{i<n}K_i$. Note that $U_0=\emptyset$ and $(U_n)_{n\in\IN}$ is an increasing sequence of infinite subsets of $X$. The conditions (1)--(4) imply the conditions:
\begin{itemize}
\item[$(1')$] for any $n\in\w$ there exists $m\in\w$ such that $U_nU_n\subseteq U_m$;
\item[$(2')$] for any $a,b\in X^1$ and $n\in\w$ there exists $m\in\w$ such that $aU_nb\subseteq U_m$;
\item[$(3')$] for any $n\in\w$ and $a,b,c\in X^1$ the set $\{x\in U_n:axb=c\}$ is finite;
\item[$(4')$] for any $n\in\w$ and $c\in X$ the set $\{(x,y)\in U_n\times U_n:xy=c\}$ is finite.
\end{itemize}

Observe that the topology $\tau^0$ coincides with the Hausdorff topology
 $$\{V\subseteq X^0:0\in V\;\Ra\;\forall n\in\w\;(|U_n\setminus V|<\w)\}.$$
The definition of the topology $\tau^0$ guarantees that $0$ is the unique non-isolated point of the  topological space $(X^0,\tau^0)$. So, it remains to prove that $(X^0,\tau^0)$ is a topological semigroup.

First we show that for every $a,b\in X^1$ the shift $s_{a,b}:X^0\to X^0$, $x\mapsto axb$, is continuous. Since $0$ is a unique non-isolated point of $(X^0,\tau^0)$, it suffices to check the continuity of the shift $s_{a,b}$ at $0$. Given any neighborhood $V\in\tau^0$ of $0$, we need to show that the set $s_{a,b}^{-1}(V)=\{x\in X^0:axb\in V\}$ belongs to the topology $\tau^0$. This will follow as soon as we check that for every $n\in\w$ the set $U_n\setminus s_{a,b}^{-1}(V)$ is finite. Fix any $n\in\omega$. By condition $(2')$, there exists a number $m\in\w$ such that $aU_nb\subseteq U_m$. Then
$$U_n\setminus s_{a,b}^{-1}(V)=\{x\in U_n:axb\notin V\}=\{x\in U_n:axb\in U_m\setminus V\}.$$Since the set $V$ is open, the definition of the topology $\tau^0$ guarantees that the difference $U_m\setminus V$ is finite. Applying condition $(3')$, we conclude that the set $U_n\setminus s_{a,b}^{-1}(V)$ is finite. Therefore, the shift $s_{a,b}$ of the semigroup $X^0$ is continuous with respect to the topology $\tau^0$ and the semigroup operation is continuous on the subset $(X^0\times X)\cup (X\times X^0)$.

So, it remains to check the continuity of the semigroup operation at $(0,0)$. Fix any neighborhood $V\in\tau^0$ of $0=00$. By condition $(1')$, for every $n\in\w$ there exists $m_n\in\w$ such that $U_nU_n\subseteq U_{m_n}$. The definition of the topology $\tau^0$ ensures that for every $n\in\w$ the set $U_{m_n}\setminus V$ is finite. Applying condition $(4')$, we conclude that for every $n\in\w$ the set
$$\Pi_n=\{(x,y)\in U_n\times U_n:xy\notin V\}=\{(x,y)\in U_n\times U_n:xy\in U_{m_n}\setminus V\}$$is finite. Then we can find a finite set $P_n\subseteq U_n$ such that $\Pi_n\subseteq P_n\times P_n$.

Consider the set $$W=\{0\}\cup\bigcup_{n\in\IN}\big((U_n\setminus U_{n-1})\setminus P_n\big)$$
and observe that for every $n\in\IN$ the complement
$$U_n\setminus W\subseteq\bigcup_{1\leq k\le n}P_k$$is finite. Then $W\in\tau^0$ by the definition of the topology $\tau^0$.
We claim that $WW\subseteq V$. Assuming the opposite, find $x,y\in W$ such that $xy\notin V$. Let $n,k\in\IN$ be unique numbers such that $x\in (U_n\setminus U_{n-1})\setminus P_n$ and $y\in (U_k\setminus U_{k-1})\setminus P_k$. If $n\le k$, then $(x,y)\in\Pi_k$ and $x,y\in P_k$, which contradicts the choice of $y$. If $k\le n$, then $(x,y)\in \Pi_n$ and $x,y\in P_n$, which contradicts the choice of $x$. In both cases we obtain a contradiction, which completes the proof of the continuity of the semigroup operation on $(X^0,\tau^0)$.
\end{proof}




A nonempty family $\F$ of nonempty subsets of a set $X$ is called a {\em filter} on $X$ if $\F$ is closed under intersections and taking supersets in $X$. A subfamily $\mathcal B\subseteq\F$ is called a {\em base} of a filter $\F$ if each set $F\in\F$ contains some set $B\in\mathcal B$. In this case we say that the filter $\F$ is generated by the base $\mathcal B$.
A filter $\F$ on $X$ is
\begin{itemize}
\item {\em free} if $\bigcap\F=\emptyset$;
\item {\em principal} if $\{x\}\in\F$ for some $x\in X$;
\item an {\em ultrafilter} if for any $F\subseteq X$ either $F\in \F$ or $X\setminus F\in \F$.
\end{itemize}

The set $\Fil(X)$ of filters on a semigroup $X$ has a natural structure of a semigroup: for any filters $\mathcal E,\F$ on $X$ their product $\mathcal E\F$ is the filter generated by the base $\{EF:E\in\mathcal E,\;\;F\in\F\}$ where $EF=\{xy:x\in E,\;y\in F\}$. Identifying each element $x\in X$ with the principal filter $\{F\subseteq X:x\in F\}$, we identify the semigroup $X$ with a subsemigroup of the semigroup $\Fil(X)$.

\begin{proposition}\label{t:T1} A semigroup $X$ is $\mathsf{T_{\!1}S}$-closed if for any free ultrafilter $\F$ on $X$ there are elements $a_0,\ldots,a_n\in X^1$ such that the filter $a_0\F a_1\F\cdots \F a_n$ is neither free nor principal.
\end{proposition}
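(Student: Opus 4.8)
The plan is to prove the implication directly: assuming the filter condition, I will show that for every isomorphic topological embedding $e\colon X\to Y$ into a $T_1$ topological semigroup $Y$ the image $e[X]$ is closed. Identifying $X$ with the discrete subspace $e[X]\subseteq Y$, I would argue by contradiction and fix a point $y\in\overline{X}\setminus X$. First I would form the filter on $X$ generated by the traces $O\cap X$, where $O$ ranges over the neighbourhoods of $y$ in $Y$; these sets are nonempty since $y\in\overline X$. This filter is \emph{free}: if some $x\in X$ lay in every $O\cap X$, then $x$ would lie in every neighbourhood of $y$, which is impossible in a $T_1$ space because $Y\setminus\{x\}$ is a neighbourhood of $y\neq x$. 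Extending it to a free ultrafilter $\F$ on $X$, I record that $\F$, regarded as a filter on $Y$, converges to $y$, as every neighbourhood of $y$ contains one of its traces $O\cap X\in\F$.

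The heart of the argument comes next. Applying the hypothesis to $\F$, I obtain $a_0,\dots,a_n\in X^1$ for which the filter $\mathcal V:=a_0\F a_1\F\cdots\F a_n$ on $X$ is neither free nor principal (its members are subsets of $X$ because $n\ge 1$ forces at least one factor from $\F$). I then claim that $\mathcal V$, viewed as a filter on $Y$, converges to the point $z:=a_0ya_1\cdots ya_n\in Y$, the value at $y$ of the semigroup polynomial $f(x)=a_0xa_1\cdots xa_n$. To prove this I would use the continuous map $F\colon Y^n\to Y$, $F(y_1,\dots,y_n)=a_0y_1a_1\cdots y_na_n$, assembled from the continuous multiplication and the continuous translations by the fixed elements $a_i$; it satisfies $F(y,\dots,y)=z$. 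Given a neighbourhood $W$ of $z$, continuity of $F$ at the diagonal point $(y,\dots,y)$ yields a neighbourhood $O$ of $y$ with $F(O^{\,n})\subseteq W$; setting $U:=O\cap X\in\F$ gives $a_0Ua_1\cdots Ua_n=F(U^{\,n})\subseteq W$, and since $a_0Ua_1\cdots Ua_n$ is a basic member of $\mathcal V$, it follows that $W\in\mathcal V$. Hence $\mathcal V\to z$.

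With this convergence in hand, the $T_1$ axiom and discreteness finish the proof. Because $\mathcal V$ is not free, I pick $p\in\bigcap\mathcal V\subseteq X$; as $p$ lies in every member of $\mathcal V$ and every neighbourhood of $z$ contains such a member, $p$ lies in every neighbourhood of $z$, so $p=z$ by the $T_1$ axiom, and in particular $z\in X$. Now discreteness of $X$ in $Y$ provides an open $O\subseteq Y$ with $O\cap X=\{z\}$; convergence $\mathcal V\to z$ forces some $V\in\mathcal V$ with $V\subseteq O$, whence $V\subseteq O\cap X=\{z\}$ and $V=\{z\}$. Thus $\{z\}\in\mathcal V$, so $\mathcal V$ is principal, contradicting its choice. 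This contradiction shows that $e[X]$ is closed in $Y$, and therefore $X$ is $\mathsf{T_{\!1}S}$-closed.

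I expect the main obstacle to be the convergence claim $\mathcal V\to z$, that is, translating the purely algebraic product $a_0\F a_1\cdots\F a_n$ in $\Fil(X)$ into genuine topological convergence in $Y$; the device of the multivariate polynomial map $F$ together with the diagonal evaluation $F(y,\dots,y)=z$ is exactly what bridges algebra and topology here, and some care is needed to handle the case where certain $a_i$ equal the adjoined unit $1$. Once that bridge is in place, the dichotomy ``neither free nor principal'' is invoked twice — freeness to pin the limit $z$ down inside $X$, and non-principality to collide with the discreteness of $X$ — and the $T_1$ separation axiom is precisely what makes both steps go through.
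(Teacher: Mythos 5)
Your proof is correct and takes essentially the same route as the paper's: both extend the free trace filter of a limit point $y\in\overline{X}\setminus X$ to an ultrafilter $\mathcal F$, use non-freeness together with continuity of the polynomial map and the $T_1$ axiom to identify the point of $\bigcap_{F\in\mathcal F}a_0Fa_1\cdots Fa_n$ with $a_0ya_1\cdots ya_n$, and then combine discreteness of $X$ in $Y$ with continuity to force the filter $a_0\mathcal Fa_1\cdots\mathcal Fa_n$ to be principal, a contradiction. Your packaging of the middle step as ``the filter $\mathcal V$ converges to $z$ in $Y$'' via the multivariate map $F$ is just a cosmetic reorganization of the paper's inline neighborhood argument.
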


\begin{proof} To derive a contradiction, assume that a semigroup $X$ is not $\mathsf{T_{\!1}S}$-closed, but for any free ultrafilter $\F$ on $X$ there are elements $a_0,\dots,a_n\in X^1$ such that the filter $a_0\F a_1\F\cdots \F a_n$ is neither free nor principal. By our assumption, $X$ is a non-closed subsemigroup of some $T_1$ topological semigroup $Y$  whose topology is denoted by $\tau_Y$.
Take any element $y\in \overline{X}\setminus X$ and consider the filter $\mathcal H$ on $X$ generated by the family $\{X\cap U:y\in U\in\tau_Y\}$.  Since the space $Y$ is $T_1$, the filter $\mathcal H$ is free. Let $\F$ be any ultrafilter on $X$ which contains $\mathcal H$. By our assumption, there exist elements $a_0,\dots,a_n\in X^1\subset Y^1$ such that the filter $a_0\F\cdots \F a_n$ is neither free nor principal.
Since this filter is not free, there exists an element $z\in \bigcap_{F\in \F}a_0F\cdots Fa_n$. 
We claim that $a_0ya_1\cdots ya_n=z$. In the opposite case, we can find a neighborhood $U\subseteq Y$ of $y$ such that $a_0Ua_1\cdots Ua_n\subseteq Y\setminus\{z\}$. Then for the set $F=U\cap X\in\mathcal H\subseteq \mathcal{F}$ we obtain $z\notin a_0Fa_1\cdots Fa_n$, which contradicts the choice of $z$. Hence $a_0ya_1\cdots ya_n=z$. Since $X$ is a discrete subspace of $Y$, there exists an open set $V\subseteq Y$ such that $V\cap X=\{z\}$. By the continuity of the semigroup operation on $Y$, the point $y$ has a neighborhood $W\subseteq Y$ such that $a_0Wa_1\cdots Wa_n\subseteq V$. Then the set
$F=X\cap W\in\F$ has the property $a_0Fa_1\cdots Fa_n\subseteq X\cap V=\{z\}$, implying that the filter $a_0\F a_1\cdots \F a_n$ is principal. But this contradicts the choice of the points $a_0,\dots,a_n$.
\end{proof}

\begin{proposition}\label{t:iT1} A semigroup $X$ is injectively $\mathsf{T_{\!1}S}$-closed if for any free ultrafilter $\F$ on $X$ there are elements $a_0,\ldots,a_n\in X^1$ and distinct elements $u,v\in X$ such that $$\{u,v\}\subseteq \bigcap_{F\in\F}a_0F a_1F\cdots F a_n.$$
\end{proposition}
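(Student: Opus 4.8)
The plan is to mimic the proof of Proposition~\ref{t:T1}, strengthening its final step so as to compensate for the absence of a topological embedding. Suppose, for a contradiction, that $X$ is not injectively $\mathsf{T_{\!1}S}$-closed. Then there is an injective continuous homomorphism $i:X\to Y$ into a $T_1$ topological semigroup $Y$ such that $i[X]$ is not closed in $Y$. Fix a point $y\in\overline{i[X]}\setminus i[X]$ and consider the filter $\mathcal H$ on $X$ generated by the sets $i^{-1}(U)$, where $U$ ranges over the neighborhoods of $y$ in $Y$; each such preimage is nonempty because $y\in\overline{i[X]}$. Since $Y$ is $T_1$ and $i(x)\ne y$ for every $x\in X$, for each $x$ some neighborhood of $y$ misses $i(x)$, so $\mathcal H$ is free. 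Let $\F\supseteq\mathcal H$ be any ultrafilter on $X$, and apply the hypothesis to obtain $a_0,\dots,a_n\in X^1$ and distinct $u,v\in X$ with $\{u,v\}\subseteq\bigcap_{F\in\F}a_0Fa_1\cdots Fa_n$.

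Next I would show that the continuous polynomial map $f\colon Y\to Y$, $f(t)=i(a_0)\,t\,i(a_1)\cdots t\,i(a_n)$ (with the convention that $i$ sends the adjoined unit of $X^1$ to the unit of $Y^1$), satisfies $f(y)=i(u)$. Indeed, if $f(y)\ne i(u)$, then, since $Y$ is $T_1$, there is an open neighborhood $V$ of $f(y)$ with $i(u)\notin V$; by continuity of the map $(t_1,\dots,t_n)\mapsto i(a_0)t_1 i(a_1)\cdots t_n i(a_n)$ at $(y,\dots,y)$, there is a neighborhood $W$ of $y$ with $i(a_0)Wi(a_1)\cdots Wi(a_n)\subseteq V$. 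Putting $F=i^{-1}(W)\in\mathcal H\subseteq\F$ and using that $i$ is a homomorphism, we get $i[a_0Fa_1\cdots Fa_n]\subseteq i(a_0)Wi(a_1)\cdots Wi(a_n)\subseteq V\not\ni i(u)$, so by injectivity $u\notin a_0Fa_1\cdots Fa_n$, contradicting the choice of $u$. The same argument gives $f(y)=i(v)$, whence $i(u)=f(y)=i(v)$, and injectivity of $i$ forces $u=v$, contradicting $u\ne v$.

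The only genuinely new point compared with Proposition~\ref{t:T1} is the source of the final contradiction. There the image $X$ was a discrete subspace of $Y$, so a single point in the intersection could be isolated by an open set and one could conclude that the filter $a_0\F a_1\cdots\F a_n$ is principal; here $i[X]$ need not be discrete, so that route is unavailable. This is exactly why the hypothesis supplies two distinct elements $u\ne v$: both are forced to coincide with the single value $f(y)$, and the clash is resolved only by violating injectivity of $i$. I expect the main care to lie in the bookkeeping of translating products computed in $X$ into the topology of $Y$ through the homomorphism $i$, together with the correct handling of units from $X^1$; the topological input is just the $T_1$ axiom, used once to make $\mathcal H$ free and once to separate $i(u)$ (resp.\ $i(v)$) from $f(y)$.
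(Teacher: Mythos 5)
Your proof is correct and is essentially the paper's own argument: the same free filter generated by the preimages of neighborhoods of a point $y\in\overline{i[X]}\setminus i[X]$, the same extension to an ultrafilter and application of the hypothesis, and the same continuity-plus-$T_1$ step, with injectivity of $i$ supplying the final contradiction. The only difference is organizational: the paper performs a single $T_1$ separation, choosing one neighborhood $W$ of $h(a_0)y_0h(a_1)\cdots y_0h(a_n)$ with $|W\cap\{h(u),h(v)\}|\le 1$ and deriving the contradiction $\{u,v\}\subseteq h^{-1}[W]$, whereas you separate twice to prove $f(y)=i(u)$ and $f(y)=i(v)$ and then conclude $u=v$; the two bookkeepings are interchangeable.
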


\begin{proof}
Assuming that $X$ is not injectively $\mathsf{T_{\!1}S}$-closed, we can find a continuous injective homomorphism $h:X\rightarrow Y$ into a topological semigroup $(Y,\tau_Y)\in\mathsf{T_{\!1}S}$ such that $h[X]$ is not closed in $Y$. Fix any element $y_0\in \overline{h[X]}\setminus h[X]$. Since the space $Y$ is $T_1$, the filter $\mathcal H$ on $X$ generated by the base $\{h^{-1}[U]:y_0\in U\in\tau_Y\}$ is free. Let $\F$ be any ultrafilter on $X$ which contains $\mathcal H$. By the assumption, there exist elements $a_0,\ldots,a_n\in X^1$ and distinct elements $u,v\in X$ such that $\{u,v\}\subseteq a_0F a_1\cdots Fa_n$ for any $F\in \F$. 
Since $(Y,\tau_Y)$ is a $T_1$-space, the point $h(a_0)y_0h(a_1)\cdots y_0h(a_n)\in Y$ has a neighborhood $W\in\tau_Y$ such that $|W\cap\{h(u),h(v)\}|\le 1$ and hence $|\{u,v\}\cap h^{-1}[W]|\le 1$. By the continuity of semigroup operation on $Y$, there exists a neighborhood $U\in\tau_Y$ of $y_0$ such that $h(a_0)Uh(a_1)\cdots Uh(a_n)\subseteq W$. Note that $h^{-1}[U]\in\mathcal H\subseteq\F$ and hence $$\{u,v\}\subseteq a_0h^{-1}[U]a_1\cdots h^{-1}[U]a_n=h^{-1}[h(a_0)Uh(a_1)\cdots Uh(a_n)]\subseteq h^{-1}[W],$$ which contradicts the choice of $W$.
\end{proof}

The following example constructed by Taimanov~\cite{Taimanov73} shows that there exists an injectively $\mathsf{T_{\!1}S}$-closed semigroup which is not ideally $\mathsf{T_{\!z}S}$-closed, and hence for any class $\C$ with $\mathsf{T_{\!z}S}\subseteq\C\subseteq\mathsf{T_{\!1}S}$, the injective $\C$-closedness does not imply the ideal $\C$-closedness.

\begin{example} Given an infinite cardinal $\kappa$, consider the semigroup $X=(\kappa,*)$ endowed with the binary operation
$$x*y=\begin{cases}1,&\mbox{if $x=y>1$};\\
0,&\mbox{otherwise.}
\end{cases}
$$
Observe that for any free filter $\mathcal F$ on $X$ we have $\{0,1\}\subseteq\bigcap_{F\in\F}FF$. By Proposition~\ref{t:iT1}, the Taimanov semigroup is injectively $\mathsf{T_{\!1}S}$-closed. We claim that for the ideal $J=\{0,1\}\subset X$, the quotient semigroup $X/J$ is not $\mathsf{T_{\!z}S}$-closed. Observe that $X/J$ is a semigroup with trivial multiplication, i.e., $ab=J\in X/J$ for any $a,b\in X/J$. Take any Hausdorff zero-dimensional space $Y$ containing $X/J$ as a non-closed discrete subspace. Endow $Y$ with the continuous semigroup operation defined by $xy=J\in X/J\subset Y$ for all $x,y\in Y$. Since $X/J$ is a non-closed discrete subsemigroup of zero-dimensional topological semigroup $Y$,  the semigroup $X$ is not ideally $\mathsf{T_{\!z}S}$-closed.
\end{example}

The following trivial characterization of $\mathsf{T_{\!0}S}$-closed semigroups explains why we restrict our attention to  topological semigroups satisfying the separation axioms $T_i$ for $i\ge 1$.

\begin{proposition} For a topological semigroup $X\in\mathsf{T_{\!0}S}$ the following conditions are equivalent:
\begin{enumerate}
\item $X$ is $\mathsf{T_{\!0}S}$-closed;
\item $X$ is absolutely $\mathsf{T_{\!0}S}$-closed;
\item $X=\emptyset$.
\end{enumerate}
\end{proposition}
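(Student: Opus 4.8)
The plan is to establish the cycle $(3)\Ra(2)\Ra(1)\Ra(3)$. Two of these are immediate and carry no content. The implication $(2)\Ra(1)$ is just the general implication ``absolutely $\C$-closed $\Ra$ $\C$-closed'' recorded right after the first definition, specialized to $\C=\mathsf{T_{\!0}S}$. The implication $(3)\Ra(2)$ holds because if $X=\emptyset$, then every continuous homomorphism $h:X\to Y$ has image $h[X]=\emptyset$, and the empty set is closed in every topological space. Thus the entire substance of the proposition lies in $(1)\Ra(3)$, which I would prove in contrapositive form: \emph{every nonempty $T_0$ topological semigroup $X$ fails to be $\mathsf{T_{\!0}S}$-closed.}

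To witness this failure I would realize $X$ as a dense, non-closed subsemigroup of its $0$-extension $Y=X^0=X\cup\{0\}$ carried with a deliberately coarse (non-$T_1$) topology. Concretely, declare $U\subseteq Y$ to be open if and only if either $U$ is open in $X$ or $U=Y$. This is readily checked to be a topology, its subspace topology on $X$ is the original one, and $X$ itself is open in $Y$; hence the inclusion $e:X\to Y$ is an isomorphic topological embedding. The key feature is that the only open neighborhood of the adjoined point $0$ is the whole space $Y$. Consequently $0$ lies in the closure of every nonempty subset of $Y$, in particular $0\in\overline{X}$; since $0\notin X$, the image $e[X]=X$ is not closed in $Y$. (Note this $Y$ is genuinely only $T_0$: $\{x\}$ is not closed for $x\in X$, because $Y\setminus\{x\}$ is not open, as the only open set meeting $0$ is $Y$ itself.)

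It then remains to confirm that $Y\in\mathsf{T_{\!0}S}$. The space $Y$ is $T_0$: distinct points of $X$ are separated already in the $T_0$ space $X$, and any $x\in X$ is separated from $0$ by the open set $X$. For the algebraic structure I take $0$ to be a zero element, $0y=0=y0$ for all $y\in Y$, which makes $X^0$ an associative semigroup extending $X$. The continuity of the multiplication $m:Y\times Y\to Y$ is the only verification, and it is painless. At a pair $(a,b)\in X\times X$ the product $ab$ lies in $X$; given an open neighborhood $V$ of $ab$ we may assume $V$ is open in $X$ (otherwise $V=Y$ is trivial) and pull it back via the continuity of multiplication on $X$ to open sets $U_1\ni a$, $U_2\ni b$ with $U_1U_2\subseteq V$, noting that products of elements of $X$ are computed identically in $Y$. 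At any pair with $a=0$ or $b=0$ the product equals $0$, whose sole neighborhood is $Y$, so $m^{-1}(Y)=Y\times Y$ trivially supplies the required neighborhoods.

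The main point to keep an eye on is the balancing act in the choice of topology on $Y$: it must be coarse enough around $0$ to force $0\in\overline X$ (hence density and non-closedness) yet fine enough on $X$ to recover the original subspace topology and preserve $T_0$-separation and the continuity of multiplication. There is no deeper obstacle; the weakness of the $T_0$ axiom is exactly what permits adjoining such a ``generic'' closure point, which is why no nonempty $T_0$ topological semigroup can be $\mathsf{T_{\!0}S}$-closed.
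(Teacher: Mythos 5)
Your proof is correct and is essentially the paper's own argument: the paper likewise reduces everything to $(1)\Ra(3)$ and, for a nonempty $X$, takes the $0$-extension $X^0$ with the topology $\tau^0=\{X^0\}\cup\tau$ (exactly your ``open iff open in $X$ or equal to $Y$'' topology), observing that this yields a $T_0$ topological semigroup containing $X$ as a non-closed subsemigroup. Your write-up merely spells out the routine verifications that the paper dismisses as ``easy to see.''
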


\begin{proof} The implications $(3)\Ra(2)\Ra(1)$ are trivial. To see that $(1)\Ra(3)$, assume that the semigroup $X$ is not empty.  On the $0$-extension $X^0$ of $X$ consider the topology $\tau^0=\{X^0\}\cup\tau$ where $\tau$ is the topology of $X$. It is easy to see that $(X^0,\tau^0)$ is a $T_0$ topological semigroup containing $X$ as a non-closed subsemigroup. Consequently, $X$ is not $\mathsf{T_{\!0}S}$-closed.
\end{proof}

\section{$\C$-closedness and $\C$-discreteness}

In this section we outline a connection between (injectively) $\C$-closed  and $\C$-discrete  semigroups.

\begin{proposition}\label{discr}
Every $\mathsf{T_{\!1}S}$-closed topological semigroup $X\in \mathsf{T_{\!1}S}$ is discrete.
\end{proposition}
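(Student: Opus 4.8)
The plan is to prove the contrapositive by an embedding construction: assuming $X$ is \emph{not} discrete, I produce an isomorphic topological embedding of $X$ into a $T_1$ topological semigroup $Y$ in which the image fails to be closed, contradicting $\mathsf{T_{\!1}S}$-closedness. Since $X$ is not discrete, fix a non-isolated point $a\in X$, i.e. $a\in\overline{X\setminus\{a\}}$. The idea is to glue to $X$ a single new ``phantom twin'' $a'$ of $a$ that sits in the closure of $X$ but is separated from $a$ as a point, while algebraically behaving exactly like $a$.

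Concretely, I would set $Y=X\cup\{a'\}$ with $a'\notin X$, and define the multiplication on $Y$ through the retraction $r\colon Y\to X$ given by $r(a')=a$ and $r(x)=x$ for $x\in X$, via the formula $u\cdot v=r(u)\,r(v)$ (product taken in $X$). Every product then lands in $X$, so $Y$ is closed under the operation, and associativity is immediate: since $r(u\cdot v)=u\cdot v=r(u)r(v)$, both $(u\cdot v)\cdot w$ and $u\cdot(v\cdot w)$ reduce to $r(u)r(v)r(w)$ by associativity in $X$. Thus $Y$ is a semigroup containing $X$ as a subsemigroup. For the topology I take as a base all original open subsets of $X$ together with the sets $(U\setminus\{a\})\cup\{a'\}$, where $U$ ranges over $X$-neighborhoods of $a$; a short check (using that $X$ is $T_1$, so $X\setminus\{a\}$ is open) shows this family is a base, that it induces the original topology on $X$, and that $X=Y\setminus\{a'\}$ is open. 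From this one verifies $Y$ is $T_1$ and, crucially, that every neighborhood of $a'$ meets $X$ in $U\setminus\{a\}\neq\emptyset$ precisely because $a$ is non-isolated; hence $a'\in\overline{X}\setminus X$ and $X$ is not closed in $Y$.

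The main obstacle is continuity of the multiplication on $Y$, and here the retraction formulation pays off: I first check that $r$ is continuous (the only nontrivial case is at $a'$, where $r^{-1}(U)=U\cup\{a'\}$ for $U\ni a$ open in $X$, and $U\cup\{a'\}$ is the union of $U$ with the basic neighborhood $(U\setminus\{a\})\cup\{a'\}$, hence open). Then the multiplication on $Y$ factors as
$$m_Y=\iota\circ m_X\circ(r\times r),$$
where $m_X$ is the (continuous) multiplication of $X$ and $\iota\colon X\hookrightarrow Y$ is the inclusion; being a composition of continuous maps, $m_Y$ is continuous, so $Y\in\mathsf{T_{\!1}S}$. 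Consequently the inclusion $X\hookrightarrow Y$ is an isomorphic topological embedding into a $T_1$ topological semigroup with non-closed image, contradicting the hypothesis that $X$ is $\mathsf{T_{\!1}S}$-closed. Therefore $X$ must be discrete.
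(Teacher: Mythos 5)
Your proposal is correct and is essentially the paper's own proof: you adjoin the same ``twin'' point of a non-isolated point $a$, with the same multiplication ($a'a'=aa$, $a'x=ax$, $xa'=xa$) and the same topology generated by $\tau_X\cup\{(U\setminus\{a\})\cup\{a'\}:a\in U\in\tau_X\}$. Your retraction formulation $m_Y=\iota\circ m_X\circ(r\times r)$ is merely a tidy way of carrying out the verification the paper dismisses as ``straightforward to check.''
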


\begin{proof}
To derive a contradiction, assume that the topological semigroup $X$ contains a non-isolated point $a$. Take any point $b\notin X$ and consider the set $Y=X\cup\{b\}$ endowed with the semigroup operation defined as follows:
\begin{itemize}
\item $X$ is a subsemigroup of $Y$;
\item $bb=aa$;
\item for every $x\in X$, $bx=ax$ and $xb=xa$.
\end{itemize}
By $\tau_X$ we denote the topology of $X$. Let $\tau_Y$ be the topology on $Y$ generated by the base $\tau_X\cup\{(U\setminus\{a\})\cup\{b\}:a\in U\in\tau_X\}$. It is straightforward to check that $(Y,\tau_Y)$ is a $T_1$ topological semigroup containing $X$ as a non-closed subsemigroup, which contradicts the $\mathsf{T_{\!1}S}$-closedness of $X$.
\end{proof}

\begin{proposition}\label{p:iT1}
For a semigroup $X$ the following conditions are equivalent:
\begin{enumerate}
\item $X$ is injectively $\mathsf{T_{\!1}S}$-closed;
\item $X$ is $\mathsf{T_{\!1}S}$-closed and $\mathsf{T_{\!1}S}$-discrete.
\end{enumerate}
\end{proposition}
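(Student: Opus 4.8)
The plan is to prove both implications by first reducing to the discrete case and then exploiting the twin-point construction used in the proof of Proposition~\ref{discr}.

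For the implication $(1)\Ra(2)$, I would first observe that every isomorphic topological embedding is in particular an injective continuous homomorphism, so injective $\mathsf{T_{\!1}S}$-closedness trivially implies $\mathsf{T_{\!1}S}$-closedness. Since $X\in\mathsf{T_{\!1}S}$ is then $\mathsf{T_{\!1}S}$-closed, Proposition~\ref{discr} yields that $X$ is discrete. To check $\mathsf{T_{\!1}S}$-nontopologizability, I would take an arbitrary injective homomorphism $i:X\to Y$ with $Y\in\mathsf{T_{\!1}S}$; because $X$ is discrete, $i$ is automatically continuous, so injective $\mathsf{T_{\!1}S}$-closedness guarantees that $Z:=i[X]$ is closed in $Y$. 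It remains to show that the $T_1$ topological subsemigroup $Z$ is discrete.

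The key step is to rule out a non-isolated point of $Z$. Assuming $Z$ has a non-isolated point $a$, I would apply verbatim the twin-point construction from the proof of Proposition~\ref{discr}: adjoin a point $b\notin Y$ with $bb=aa$ and $bx=ax$, $xb=xa$ for all $x\in Z$, and topologize $W=Z\cup\{b\}$ by the base $\tau_Z\cup\{(U\setminus\{a\})\cup\{b\}:a\in U\in\tau_Z\}$. This makes $W$ a $T_1$ topological semigroup in which $Z$ is not closed, since $b\in\overline Z$ because $a$ is non-isolated. Then the composition of $i:X\to Z$ with the inclusion $Z\hookrightarrow W$ is an injective continuous homomorphism into $W\in\mathsf{T_{\!1}S}$ whose image $Z$ is not closed, contradicting the injective $\mathsf{T_{\!1}S}$-closedness of $X$. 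Hence $Z$ is discrete, and $X$ is $\mathsf{T_{\!1}S}$-nontopologizable.

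For the converse $(2)\Ra(1)$, I would again invoke Proposition~\ref{discr} to conclude from $\mathsf{T_{\!1}S}$-closedness that $X$ is discrete. Given an injective continuous homomorphism $i:X\to Y$ with $Y\in\mathsf{T_{\!1}S}$, the $\mathsf{T_{\!1}S}$-nontopologizability of $X$ forces $i[X]$ to be a discrete subspace of $Y$; as $X$ and $i[X]$ are both discrete and $i$ is a bijection onto $i[X]$, the inverse map is continuous as well, so $i$ is a homeomorphism onto its image and hence an isomorphic topological embedding. Applying the $\mathsf{T_{\!1}S}$-closedness of $X$ to this embedding shows that $i[X]$ is closed in $Y$, which is exactly injective $\mathsf{T_{\!1}S}$-closedness. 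The only delicate point in the whole argument is the discreteness step for $Z$ in the first implication, where the twin-point construction is needed to convert a putative non-isolated point into a failure of closedness.
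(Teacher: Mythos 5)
Your proof is correct and follows essentially the same route as the paper: both implications are reduced to discreteness via Proposition~\ref{discr}, after which discreteness of $X$ makes the relevant homomorphisms continuous and turns $i$ into a topological embedding. The only difference is in $(1)\Rightarrow(2)$, where the paper applies Proposition~\ref{discr} as a black box to $i[X]$ (after noting that $i[X]$ inherits injective $\mathsf{T_{\!1}S}$-closedness from $X$), whereas you replay the twin-point construction from its proof to contradict the injective $\mathsf{T_{\!1}S}$-closedness of $X$ directly; both arguments are sound.
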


\begin{proof}
$(1)\Ra(2)$: Assume that the semigroup $X$ is injectively $\mathsf{T_{\!1}S}$-closed. Then $X$ is $\mathsf{T_{\!1}S}$-closed.  To see that $X$ is $\mathsf{T_{\!1}S}$-discrete, consider any injective homomorphism $i:X\to Y$ to a $T_1$ topological semigroup $Y$. The injective $\mathsf{T_{\!1}S}$-closedness of $X$ implies the (injective) $\mathsf{T_{\!1}S}$-closedness of the topological semigroup $i[X]$. By Proposition~\ref{discr}, the topology of $i[X]$ is discrete, which means that the semigroup $X$ is $\mathsf{T_{\!1}S}$-discrete.
\smallskip

$(2)\Ra(1)$:
Assume that  a semigroup $X$ is $\mathsf{T_{\!1}S}$-closed and $\mathsf{T_{\!1}S}$-discrete. Consider any injective homomorphism $i:X\to Y$ into a $T_1$ topological semigroup $Y$. Since $X$ is $\mathsf{T_{\!1}S}$-discrete, the topological space $i[X]$ is discrete and the map $i:X\to Y$ is a topological embedding of $X$ endowed with the discrete topology into $Y$. Since the discrete topological semigroup $X$ is $\mathsf{T_{\!1}S}$-closed, the image $i[X]$ is closed in $Y$.
\end{proof}

\begin{proposition}\label{p:aT1S} For a  semigroup $X$ the following conditions are equivalent:
\begin{enumerate}
\item $X$ is absolutely $\mathsf{T_{\!1}S}$-closed;
\item $X$ is projectively $\mathsf{T_{\!1}S}$-closed and projectively $\mathsf{T_{\!1}S}$-discrete.
\end{enumerate}
\end{proposition}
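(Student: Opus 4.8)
My plan is to mirror the proof of Proposition~\ref{p:iT1}, handling the two implications separately and invoking Proposition~\ref{discr} to convert $\mathsf{T_{\!1}S}$-closedness into discreteness at the key moments.

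For the implication $(1)\Rightarrow(2)$ I would first note that absolute $\mathsf{T_{\!1}S}$-closedness implies $\mathsf{T_{\!1}S}$-closedness, so Proposition~\ref{discr} makes $X$ discrete and every homomorphism out of $X$ automatically continuous. To obtain projective closedness, I would fix a congruence $\approx$ with quotient map $q\colon X\to X/_\approx$ and feed any isomorphic topological embedding $e\colon X/_\approx\to Y$ (with $X/_\approx$ discrete and $Y\in\mathsf{T_{\!1}S}$) into the composite $e\circ q$, which is a continuous homomorphism; absolute closedness of $X$ then closes $e[X/_\approx]=(e\circ q)[X]$, so $X/_\approx$ is $\mathsf{T_{\!1}S}$-closed. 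To obtain projective nontopologizability, given a homomorphism $h\colon X\to Y$ with $Y\in\mathsf{T_{\!1}S}$ I would argue that the subsemigroup $h[X]\subseteq Y$ is itself absolutely $\mathsf{T_{\!1}S}$-closed: any continuous homomorphism $g\colon h[X]\to Z$ into $Z\in\mathsf{T_{\!1}S}$ yields a continuous homomorphism $g\circ h$ on the discrete $X$, so $g[h[X]]=(g\circ h)[X]$ is closed. In particular $h[X]\in\mathsf{T_{\!1}S}$ is $\mathsf{T_{\!1}S}$-closed, and Proposition~\ref{discr} forces $h[X]$ to be a discrete subspace of $Y$.

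For $(2)\Rightarrow(1)$ I would first extract discreteness of $X$ by applying projective nontopologizability to the identity homomorphism $\mathrm{id}\colon X\to X$, where $X$ carries its own $T_1$ topology and hence lies in $\mathsf{T_{\!1}S}$. Then, given a continuous homomorphism $h\colon X\to Y$ with $Y\in\mathsf{T_{\!1}S}$, I would factor it as $h=\bar h\circ q$ through the kernel congruence $\approx$, where $q\colon X\to X/_\approx$ is the quotient map and $\bar h\colon X/_\approx\to Y$ is an injective homomorphism with $\bar h[X/_\approx]=h[X]$. Projective nontopologizability makes $h[X]$ a discrete subspace of $Y$, so, since $X/_\approx$ carries the discrete topology, $\bar h$ is an isomorphic topological embedding of the discrete semigroup $X/_\approx$ into $Y$. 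Finally, projective $\mathsf{T_{\!1}S}$-closedness guarantees that $X/_\approx$ is $\mathsf{T_{\!1}S}$-closed, whence $\bar h[X/_\approx]=h[X]$ is closed in $Y$, as required.

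The points requiring care are the topological bookkeeping: the ``projective'' hypotheses concern the abstract semigroup, so each quotient $X/_\approx$ must be tested with the discrete topology, and the identification of $\bar h$ as a genuine isomorphic topological embedding relies exactly on the discreteness of $h[X]$ supplied by projective nontopologizability. The one step I expect to be the real crux is the observation in $(1)\Rightarrow(2)$ that every image $h[X]$ inherits absolute $\mathsf{T_{\!1}S}$-closedness by pre-composition with $h$, since it is this that lets Proposition~\ref{discr} deliver discreteness; the remainder is routine diagram chasing once $X$ is known to be discrete.
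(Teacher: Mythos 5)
Your proposal is correct and takes essentially the same route as the paper's proof: both directions rest on Proposition~\ref{discr} plus the pre-composition trick (homomorphisms out of a discrete $X$ are automatically continuous), and your $(2)\Rightarrow(1)$ is exactly the paper's argument with the kernel-congruence factorization spelled out. The only cosmetic difference is in $(1)\Rightarrow(2)$: the paper deduces that every homomorphic image is injectively $\mathsf{T_{\!1}S}$-closed and then invokes Proposition~\ref{p:iT1}, whereas you apply Proposition~\ref{discr} directly to $h[X]$ with its subspace topology --- the underlying mechanism is identical.
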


\begin{proof}
Let $X$ be an absolutely $\mathsf{T_{\!1}S}$-closed semigroup.
Then $X$ is projectively $\mathsf{T_{\!1}S}$-closed and every homomorphic image of $X$ is injectively $\mathsf{T_{\!1}S}$-closed. Proposition~\ref{p:iT1} implies that $X$ is projectively $\mathsf{T_{\!1}S}$-discrete.
\smallskip

Assume that a  semigroup $X$ is projectively $\mathsf{T_{\!1}S}$-closed and projectively $\mathsf{T_{\!1}S}$-discrete.
Fix any  homomorphism $h:X\rightarrow Y$ to a $T_1$ topological semigroup $Y$. Since $X$ is projectively $\mathsf{T_{\!1}S}$-discrete, the subspace $h[X]$ of $Y$ is discrete. Since $X$ is projectively $\mathsf{T_{\!1}S}$-closed, the image $h[X]$ is closed in $Y$, witnessing that $X$ is absolutely $\mathsf{T_{\!1}S}$-closed.
\end{proof}

The above propositions motivate the problem of recognizing $\C$-discrete (or else $\C$-nontopolo\-gizable) semigroups. This problem has been considered by many authors, see \cite{BPS, DI, KOO, Kotov, Malcev, Mar, Taimanov78, Tro, Ztop}. The $\mathsf{T_{\!i}S}$-discreteness of {\em countable} semigroups can be characterized with the help of suitable Zariski topologies, see  \cite{DS1, DS2, DT0, DT1, DT2, EJM}.

For a semigroup $X$ its
\begin{itemize}
\item {\em Zariski topology} $\Zeta_X$ is the topology on $X$ generated by the subbase consisting of the sets $\{x\in X:f(x)\ne b\}$ and $\{x\in X:f(x)\ne g(x)\}$ where $b\in X$ and $f,g$ are semigroup polynomials on $X$;
\item {\em Zariski $T_1$-topology}  $\Zeta'_X$ is the topology on $X$ generated by the subbase consisting of the sets $\{x\in X:f(x)\ne b\}$ where $b\in X$ and $f$ is a semigroup polynomial on $X$.
\end{itemize}

The following theorem was proved independently by Podewski~\cite{Podewski} and Taimanov~\cite{Taimanov78}.

\begin{theorem}[Podewski--Taimanov]\label{t:Podewski}A countable semigroup $X$ is $\mathsf{T_{\!1}S}$-discrete if and only if its Zariski $T_1$-topology $\Zeta'_X$ is discrete.
\end{theorem}

The next theorem was announced by Taimanov \cite{Taimanov78} and proved by Kotov~\cite{Kotov}.

\begin{theorem}[Taimanov--Kotov]
A countable semigroup $X$ is $\mathsf{T_{\!2}S}$-discrete if and only if $X$ is $\mathsf{T_{\!z}S}$-discrete if and only if the Zariski topology $\Zeta_X$ is discrete.
\end{theorem}

We finish this section with the following example of a projectively $\mathsf{T_{\!1}S}$-closed and absolutely $\mathsf{T_{\!2}S}$-closed semilattice which is not injectively $\mathsf{T_{\!1}S}$-closed. We recall that a {\em semilattice} is a commutative semigroup of idempotents.

\begin{example} Given an infinite cardinal $\kappa$, consider the semilattice $X_\kappa=(\kappa,*)$ endowed with the binary operation
$$x*y=
\begin{cases}
x,&\mbox{if $x= y$};\\
0,&\mbox{otherwise}.
\end{cases}
$$Observe that for every free filter $\F$ on $X_\kappa$ the filter $\F\F$ is neither principal nor free. Applying Proposition~\ref{t:T1}, we conclude that the semilattice $X$ is $\mathsf{T_{\!1}S}$-closed. Since any homomorphic image of $X_\kappa$ is isomorphic to the semilattice $X_\lambda$ for some nonzero cardinal $\lambda\le\kappa$, the semilattice $X_\kappa$ is projectively $\mathsf{T_{\!1}S}$-closed. By \cite{BBm}, the semilattice $X_\kappa$ is absolutely $\mathsf{T_{\!2}S}$-closed. Since $X_\kappa$ admits a non-discrete $T_1$ semigroup topology $\tau=\{U\subseteq \kappa:0\in U\Ra ( |\kappa\setminus U|<\w)\}$, the semigroup $X_\kappa$ is $\mathsf{T_{\!1}S}$-topologizable. Then Proposition~\ref{p:iT1} implies that $X$ is not injectively $\mathsf{T_{\!1}S}$-closed.
\end{example}

\section{Polybounded semigroups}\label{s:poly}

In this section we establish some structural properties of polybounded semigroups. We start with the following three characterizations of 
polyboundedness.

\begin{proposition}\label{p:poly1} For a nonempty semigroup $X$ the following conditions are equivalent:
\begin{enumerate}
\item $X$ is polybounded;
\item for every $a,b\in X$ the semigroup $aXb$ is polybounded;
\item for some $a,b\in X$ the semigroup $aXb$ is polybounded.
\end{enumerate}
\end{proposition}

\begin{proof} To prove that $(1)\Ra(2)$, assume that the semigroup $X$ is polybounded and hence $X=\bigcup_{i=1}^np_i^{-1}(c_i)$ for some elements $c_1,\dots,c_n\in X$ and semigroup polynomials $p_1,\dots,p_n$ on $X$. Given any elements $a,b\in X$, for every $i\in\{1,\dots,n\}$, consider the function $f_i:aXb\to aXb$ defined by $f_i(x)=ap_i(bbxaa)b$, and observe that $f_i$ is a semigroup polynomial on $aXb$.
We claim that $aXb=\bigcup_{i=1}^n f^{-1}_i(acb)$. Indeed, for every $x\in aXb$ there exists $i\in\{1,\dots,n\}$ such that $p_i(bbxaa)=c_i$ and hence $f_i(x)=ap_i(bbxaa)b=ac_ib$ and finally $x\in f_i^{-1}(ac_ib)$, witnessing that the semigroup $aXb$ is polybounded.
\smallskip

The implication $(2)\Ra(3)$ is trivial.
\smallskip

To prove that $(3)\Ra(1)$, assume that for some $a,b\in X$ the semigroup $aXb$ is polybounded. Then $aXb=\bigcup_{i=1}^np_i^{-1}(c_i)$ for some elements $c_1,\dots,c_n\in aXb$ and some semigroup polynomials $p_1,\dots,p_n$ on $aXb$. For every $i\in\{1,\dots,n\}$, consider the semigroup polynomial $f_i:X\to X$, $f_i:x\mapsto p_i(axb)$, and observe that $X=\bigcup_{i=1}^nf_i^{-1}(c_i)$, witnessing that the semigroup $X$ is polybounded.
\end{proof}

\begin{proposition}\label{p:poly2} A nonempty semigroup $X$ is polybounded if and only if  $X=\bigcup_{i=1}^np_i^{-1}(b)$ for some $b\in X$ and semigroup polynomials $p_1,\dots,p_n$ on $X$.
\end{proposition}

\begin{proof} The ``if'' part is trivial. To prove the ``only if'' part, assume that $X$ is polybounded and find elements $b_1,\dots,b_n\in X$ and semigroup polynomials $p_1,\dots,p_n$ on $X$ such that $X=\bigcup_{i=1}^n p_i^{-1}(b_i)$. We lose no generality assuming that $n>1$.

Let $b=b_1\cdots b_n$. For every $i\in\{1,\dots,n\}$, consider the semigroup polynomial $f_i:X\to X$ defined by
$$f_i(x)=\begin{cases}
p_1(x)b_2\cdots b_n&\mbox{if $i=1<n$};\\
b_1\cdots b_{n-1}p_n(x)&\mbox{if $1<i=n$};\\
b_1\cdots b_{i-1}p_i(x)b_{i+1}\cdots b_n&\mbox{if $1<i<n$}.
\end{cases}
$$It is easy to see that $p_i^{-1}(b_i)\subseteq f_i^{-1}(b)$ and hence $X=\bigcup_{i=1}^np_i^{-1}(b)\subseteq \bigcup_{i=1}^nf_i^{-1}(b)\subseteq X$.
\end{proof}

\begin{proposition}\label{p:poly3} A nonempty semigroup $X$ is polybounded if and only if the semigroup $K=\bigcap_{x\in X}X^1xX^1$ is nonempty and polybounded.
\end{proposition}

\begin{proof} To prove the ``only if'' part, assume that the semigroup $X$ is polybounded. By Proposition~\ref{p:poly2}, $X=\bigcup_{i=1}^np_i^{-1}(b)$ for some $b\in X$ and some semigroup polynomials $p_1,\dots,p_n$. We claim that $b$ belongs to the ideal $K=\bigcap_{x\in X}X^1xX^1$. Indeed, given any $x\in X$, we can find $i\in\{1,\dots,n\}$ such that $b=p_i(x)\in X^1xX^1$. Therefore, the semigroup $K$ contains the element $b$ and hence $K$ is not empty. In fact, $K$ is the smallest nonempty ideal in $X$. To see that the semigroup $K$ is polybounded, for every $i\in\{1,\dots,n\}$ consider the function $f_i:K\to K$, $f_i(x)=p_i(bxb)$, and observe that $f_i$ is a semigroup polynomial on $K$. By the choice of the polynomials $p_1,\dots,p_n$, for every $x\in K$ there exists $i\in\{1,\ldots,n\}$ such that $bxb\in p_i^{-1}(b)$. It follows that $x\in f_i^{-1}(b)$, witnessing that $K=\bigcup_{i=1}^nf_i^{-1}(b)$. Hence the semigroup $K$ is polybounded. 
\smallskip

To prove the ``if'' part, assume that the semigroup $K=\bigcap_{x\in X}X^1xX^1$ is nonempty and polybounded. By Proposition~\ref{p:poly2}, $K=\bigcup_{i=1}^np_i^{-1}(b)$ for some $b\in K$ and some semigroup polynomials $p_1,\dots,p_n$ on $K$. Since $K$ is an ideal in $X$, for every $x\in X$ the element $bxb$ belongs to $K$. Then for every $i\in\{1,\dots,n\}$, the function $f_i:X\to X$, $f_i:x\mapsto p_i(bxb)$, is a well-defined semigroup polynomial on $X$. The equality  $K=\bigcup_{i=1}^np_i^{-1}(b)$ implies the equality $X=\bigcup_{i=1}^nf_i^{-1}(b)$, witnessing that the semigroup $X$ is polybounded.
\end{proof}

Next, we show that the class of polybounded semigroups is closed under taking finite products and quotients.

\begin{proposition}\label{qd}
Quotients and finite products of polybounded semigroups are polybounded.
\end{proposition}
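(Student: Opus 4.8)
The plan is to treat the two closure properties separately: first quotients, then finite products, the latter by induction that reduces everything to a product of two factors. Both rest on the elementary observation that a level set of a semigroup polynomial behaves well under the operations in question.

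For quotients I would use that semigroup polynomials push forward along homomorphisms. Let $X$ be polybounded, say $X=\bigcup_{i=1}^n f_i^{-1}(b_i)$ for polynomials $f_i(x)=a_{i,0}xa_{i,1}\cdots xa_{i,n_i}$ with $a_{i,j}\in X^1$ and $b_i\in X$. Let $q\colon X\to X/_\approx$ be the quotient homomorphism, extended to $q\colon X^1\to(X/_\approx)^1$ by $q(1)=1$. For each $i$ I define the polynomial $\bar f_i$ on $X/_\approx$ by $\bar f_i(z)=q(a_{i,0})\,z\,q(a_{i,1})\cdots z\,q(a_{i,n_i})$, so that $\bar f_i(q(x))=q(f_i(x))$ for all $x\in X$. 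Given any $z\in X/_\approx$, I choose $x\in X$ with $q(x)=z$ and pick $i$ with $f_i(x)=b_i$; then $\bar f_i(z)=q(b_i)$. Hence $X/_\approx=\bigcup_{i=1}^n\bar f_i^{-1}(q(b_i))$, so $X/_\approx$ is polybounded. This part is routine.

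For finite products it suffices, by associativity and induction, to show that the product of two polybounded semigroups $X$ and $Y$ is polybounded. Writing $X=\bigcup_{i=1}^n f_i^{-1}(b_i)$ and $Y=\bigcup_{j=1}^m g_j^{-1}(c_j)$, I get $X\times Y=\bigcup_{i,j}\big(f_i^{-1}(b_i)\times g_j^{-1}(c_j)\big)$, and since polyboundedness is plainly preserved by finite unions, it is enough to cover each rectangle $f_i^{-1}(b_i)\times g_j^{-1}(c_j)$ by a single level set of a semigroup polynomial on $X\times Y$.

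The main obstacle is that a semigroup polynomial on $X\times Y$ acts coordinatewise and therefore has the same degree in both coordinates, whereas $d_i:=\deg f_i$ and $e_j:=\deg g_j$ need not agree. I would resolve this by raising both degrees to the common value $d_ie_j$ through post-composition with power maps, which keeps the value constant on the relevant level set: set $F_{ij}\big((x,y)\big)=\big(f_i(x)^{e_j},\,g_j(y)^{d_i}\big)$. Since composing a polynomial of degree $d_i$ with the $e_j$-th power map yields degree $d_ie_j$, both coordinates of $F_{ij}$ have degree $d_ie_j$, so $F_{ij}$ is a genuine semigroup polynomial on $X\times Y$. If $f_i(x)=b_i$ and $g_j(y)=c_j$, then $F_{ij}\big((x,y)\big)=(b_i^{e_j},c_j^{d_i})$, whence $f_i^{-1}(b_i)\times g_j^{-1}(c_j)\subseteq F_{ij}^{-1}\big((b_i^{e_j},c_j^{d_i})\big)$. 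Taking the union over the finitely many pairs $(i,j)$ exhibits $X\times Y$ as a finite union of level sets, completing the argument.
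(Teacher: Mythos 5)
Your proof is correct and follows essentially the same route as the paper: the quotient case is handled by pushing the polynomials forward along the quotient homomorphism, and the product case uses exactly the paper's polynomial $p_{f,g}(x,y)=\bigl(f(x)^{\deg(g)},g(y)^{\deg(f)}\bigr)$, which equalizes the degrees of the two coordinates. The only cosmetic difference is that you organize the covering of $X\times Y$ rectangle-by-rectangle and invoke induction for more than two factors, steps the paper leaves implicit.
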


\begin{proof} Assume that $X$ is a polybounded semigroup, and hence $X=\bigcup_{p\in P}\bigcup_{b\in B}p^{-1}(b)$ for some finite set $B\subseteq X$ and some finite set $P$ of semigroup polynomials on $X$. For every polynomial $p\in P$ find a number $n_p\in \IN$ and elements $a_{p,0},a_{p,1},\dots,a_{p,n_p}\in X^1$ such that $p(x)=a_{p,0}xa_{p,1}x\cdots xa_{p,n_p}$ for all $x\in X$.

Let $Y=X/_\approx$ be a quotient semigroup of $X$, and  $q:X\to Y$ be the quotient homomorphism. For every $p\in P$ and $i\in\{0,\dots,n_p\}$, let $\tilde a_{p,i}=q(a_{p,i})$. Let $\tilde p$ be the semigroup polynomial on $Y$ defined by $\tilde p(y)=\tilde a_{p,0}y\tilde a_{p,1}y\cdots y\tilde a_{p,n_p}$. Let $\tilde P=\{\tilde p:p\in P\}$ and $\tilde B=\{\tilde b:b\in B\}$, where $\tilde b=q(b)$ for $b\in B$. Since $q$ is a homomorphism, for every $p\in P$, $b\in B$ and $x\in p^{-1}(b)$ we have $\tilde p(q(x))=q(p(x))=q(b)=\tilde b$. Then $X=\bigcup_{p\in P}\bigcup_{b\in B}p^{-1}(b)$ implies $Y=\bigcup_{\tilde p\in \tilde P}\bigcup_{\tilde b\in \tilde B}\tilde p^{-1}(\tilde b)$, which means that the quotient semigroup $Y=X/_\approx$ is polybounded.
\smallskip

To show that finite products of polybounded semigroups are polybounded, it suffices to prove that for any nonempty polybounded semigroups $X,Y$ their product $X\times Y$ is polybounded. By Proposition~\ref{p:poly2}, $X=\bigcup_{f\in P_X}f^{-1}(b_X)$ for some $b_X\in X$ and some finite set $P_X$ of semigroup polynomials on $X$.
By the same reason, $Y=\bigcup_{g\in P_Y}g^{-1}(b_Y)$ for some $b_Y\in Y$ and some finite set $P_Y$ of semigroup polynomials on $Y$.

For every polynomial $f\in P_X$, find $\deg(f)\in \IN$ and elements $a_{f,0},\dots,a_{f,\deg(f)}\in X^1$ such that $f(x)=a_{f,0}x a_{f,1}\cdots xa_{f,\deg(f)}$ for all $x\in X$. Also for every  polynomial $g\in P_Y$ find $\deg(g)\in \IN$ and elements $a_{g,0},a_{g,1}\dots,a_{g,\deg(g)}\in Y^1$ such that $g(y)=a_{g,0}y a_{g,1}\cdots ya_{g,\deg(g)}$ for all $y\in Y$.

For any semigroup polynomials $f\in P_X$ and $g\in P_Y$, consider the function $$p_{f,g}:X\times Y\to X\times Y,\quad p_{f,g}:(x,y)\mapsto (f(x)^{\deg(g)},g(y)^{\deg(f)}),$$ and observe that $p_{f,g}$ is a semigroup polynomial (of degree $\deg(f)\cdot \deg(g)$) on $X\times Y$.

Since $$X\times Y=\bigcup_{f\in P_X}\bigcup_{g\in P_Y}p_{f,g}^{-1}(b_X^{\deg(g)},b_Y^{\deg(f)}),$$
the semigroup $X\times Y$ is polybounded.
%
%
%
\end{proof}

The following example provides a simple method for constructing polybounded groups.

\begin{example}\label{ex} For any commutative group $X$ with neutral element $0$, consider the group $X\rtimes \{-1,1\}$ endowed with the group operation $$\langle x,i\rangle*\langle y,j\rangle=\langle xy^i,ij\rangle.$$ The group $X\rtimes\{-1,1\}$ is polybounded since
$$X\rtimes \{-1,1\}=\{x\in X\rtimes \{-1,1\}:ax^2ax^2=e\},$$
where $e=(0,1)$ and $a=(0,-1)$.
\end{example}

\begin{lemma}\label{new} A semigroup $X$ is polybounded, if some point of $X$ is isolated in the Zariski topology $\Zeta'_X$.
\end{lemma}

\begin{proof}
Let $a$ be an isolated point of the topological space $(X,\Zeta'_X)$.
By the definition of the topology $\Zeta'_X$, there exist semigroup polynomials $f_1,\ldots f_n$ on $X$ and elements $b_1,\ldots, b_n\in X$ such that $$\{a\}=X\setminus \bigcup_{i=1}^n\{x\in X: f_i(x)=b_i\}.$$Consider the semigroup polynomial $f_0:X\to X$, $f_0:x\mapsto x$, and let $b_0=a$. Then $X=\bigcup_{i=0}^n\{x\in X: f_i(x)=b_i\}$, witnessing that $X$ is polybounded.
\end{proof}

\begin{remark} It is straightforward to check that the group $G=\mathbb Z\rtimes \{-1,1\}$ (see Example~\ref{ex}) is polybounded, but the space $(G,\Zeta'_G)$ is homeomorphic to the topological sum of two countable spaces endowed with the cofinite topology, implying that the space $(G,\Zeta'_G)$ has no isolated points. Hence the implication of Lemma~\ref{new} cannot be reversed even for countable groups.
\end{remark}

\begin{remark} By \cite{BanS}, for every infinite cardinal $\kappa$ with $\kappa^+=2^\kappa$ there exists a non-polybounded absolutely $\mathsf{T_{\!1}S}$-closed simple group $G$ of cardinality $|G|=\kappa^+$. By Proposition~\ref{p:aT1S}, the absolutely $\mathsf{T_{\!1}S}$-closed group $G$ is projectively $\mathsf{T_{\!1}S}$-discrete. Since $G$ is not polybounded, Lemma~\ref{new} ensures that the Zariski topology $\Zeta'_X$ has no isolated points. This example shows that Podewski--Taimanov Theorem~\ref{t:Podewski} does not extend to uncountable (semi)groups.
\end{remark}

\section{Polybounded semigroups with finite-to-one shifts}\label{s:poly-shift}

In this section we establish some specific properties of polybounded semigroups with finite-to-one shifts. The principal tool here is the notion of a pruned polynomial.

A semigroup polynomial  $p:X\to X$, $p:x\mapsto a_0xa_1\cdots xa_n$, on a semigroup $X$ is said to be {\em pruned} if $a_0=1=a_n$. Obviously, for every semigroup polynomial $f$ on $X$ there exist{\color{green}\st{s}} a pruned semigroup polynomial $p$ on $X$ and elements $a,b\in X^1$ such that $f(x)=ap(x)b$ for every $x\in X$.

\begin{lemma}\label{l:pruned} For every polybounded semigroup $X$ with finite-to-one shifts there exist a finite set $B\subseteq X$ and a finite set $P$ of pruned semigroup polynomials on $X$ such that $X=\bigcup_{p\in P}\bigcup_{b\in B}p^{-1}(b)$.
\end{lemma}

\begin{proof} By the polyboundedness of $X$, there exist a finite set $A\subseteq X$ and a finite set $F$ of semigroup polynomials on $X$ such that $X=\bigcup_{f\in F}\bigcup_{a\in A}f^{-1}(a)$.  For every semigroup polynomial $f\in F$, find a pruned semigroup polynomial $p_f:X\to X$ and elements $c_f,d_f\in X^1$ such that $f(x)=c_fp_f(x)d_f$ for all $x\in X$. Since the semigroup $X$ has finite-to-one shifts, the set $B=\bigcup_{f\in F}\bigcup_{a\in A}\{x\in X:c_fxd_f=a\}$ is finite. Let $P=\{p_f:f\in F\}$. It is easy to see that $X=\bigcup_{p\in P}\bigcup_{b\in B}p^{-1}(b)$.
\end{proof}

An element $x$ of a semigroup $X$ is called {\em regular} if $x=xx^{-1}x$ for some element $x^{-1}\in X$. A semigroup $X$ is {\em regular} if every element of $X$ is regular.

\begin{lemma}\label{l:regular} For every polybounded semigroup $X$ with finite-to-one shifts, there exist a finite set $P$ of pruned semigroup polynomials on $X$ and a finite set $B$ of regular elements of $X$ such that $X=\bigcup_{p\in P}\bigcup_{b\in B}p^{-1}(b)$.
\end{lemma}

\begin{proof} By Lemma~\ref{l:pruned},  $X=\bigcup_{p\in P}\bigcup_{b\in B}p^{-1}(b)$ for some finite set $B\subseteq X$ and some finite set $P$ of pruned semigroup polynomials on $X$. We can assume that the cardinality of the set $B$ is the smallest possible.

Consider the semigroup polynomial $s:X\to X$, $s:x\mapsto xx$.
Since $BB\subseteq X=\bigcup_{p\in P}\bigcup_{b\in B}p^{-1}(b)$, for every $b\in B$ there exists $\varphi_b\in P$ such that $\varphi_b(b^2)\in B$. We claim that $\varphi_b(b^2)=b$ for every $b\in B$. Assuming that $\varphi_b(b^2)\ne b$ for some $b\in B$, we can consider the set $$P'=P\cup\{\varphi_b\circ s\circ \varphi_b\}$$of pruned semigroup polynomials on $X$. We claim that $X=\bigcup_{p\in P'}\bigcup_{c\in B\setminus\{b\}}p^{-1}(c)$. Take any element $x\in X$. If $p(x)=c$ for some $p\in P$ and $c\in B\setminus\{b\}$, then $x\in \bigcup_{p\in P'}\bigcup_{c\in B\setminus\{b\}}p^{-1}(c)$. Otherwise, $p(x)=b$ for each $p\in P$. In particular, $\varphi_b(x)=b$ and, consequently, $\varphi_b\circ s\circ \varphi_b(x)=\varphi_b(b^2)\in B\setminus\{b\}$. Therefore, $X=\bigcup_{p\in P'}\bigcup_{c\in B\setminus\{b\}}p^{-1}(c)$ which contradicts the minimality of $B$. This contradiction shows that $\varphi_b(b^2)=b$ for all $b\in B$. Since the polynomial $\varphi_b$ is pruned, there exist elements $a_1,\ldots, a_{n-1}\in X$ such that
$$b=\varphi_b(b^2)=b^2a_1b^2\cdots a_{n-1}b^2=bb^{-1}b$$for the element $b^{-1}=ba_1b^2\cdots a_{n-1}b\in X$, which means that the elements of the set $B$ are regular.
\end{proof}

For a semigroup $X$ we denote by $E(X)$ the set $\{x\in X:xx=x\}$ of idempotents of $X$. 

\begin{proposition}\label{p:poly-E} For any nonempty polybounded semigroup $X$ with finite-to-one shifts, the set $E(X)$ is finite and nonempty.
\end{proposition}

\begin{proof} By Lemma~\ref{l:regular}, $X=\bigcup_{p\in P}\bigcup_{b\in B}p^{-1}(b)$ for a finite nonempty set $P$ of pruned semigroup polynomials on $X$ and a finite nonempty set $B$ of regular elements of $X$. By the regularity, for every $b\in B$ there exists $b^{-1}\in X$ such that $b=bb^{-1}b$ and hence $bb^{-1}$ is an idempotent. Therefore, the set $E(X)\supseteq\{bb^{-1}:b\in B\}$ is not empty. Since the semigroup $X$ has finite-to-one shifts, the set $F=\bigcup_{b\in B}\{x\in X:xbb^{-1}=bb^{-1}\}$ is finite.  We claim that $E(X)\subseteq F$. Indeed, for every $e\in E(X)$ we can find a pruned polynomial $p\in P$ and an element $b\in B$ such that $b=p(e)=ece$ for some $c\in X^1$. Then $ebb^{-1}=e(ece)b^{-1}=eceb^{-1}=bb^{-1}$ and hence $e\in F$.
\end{proof}

\begin{question} 
Does {\color{blue}every} nonempty polybounded semigroup contain an idempotent?
\end{question}

Now let us recall some standard definitions related to minimal (left or right) ideals.

A nonempty subset $I$ of a semigroup $X$ is called a {\em left ideal} (resp. {\em right ideal}) if $XI\subseteq I$ (resp. $IX\subseteq I$). A left (resp. right) ideal $I$ is called {\em minimal} if $I=J$ for any left (resp. right) ideal $J\subseteq X$ with $J\subseteq I$.

An ideal $I$ in a semigroup $X$ is called  {\em the minimal ideal} if $I\ne \emptyset$ and $I\subseteq J$ for every nonempty ideal $J\subseteq X$. 
A semigroup $X$ has the minimal ideal if and only if the semigroup $K=\bigcap_{x\in X}X^1xX^1$ is not empty, in which case $K$ is the minimal ideal of $X$.

A semigroup $X$ is called
\begin{itemize}
\item {\em simple} if each nonempty ideal in $X$ coincides with $X$;
\item {\em completely simple} if $X$ is simple and contains a minimal left ideal and a minimal right ideal;
\item {\em completely regular} if $X$ is a union of subgroups.
\end{itemize}
By Theorem 3.3.2 of \cite{Howie}, a nonempty simple semigroup is completely simple if and only if it is completely regular.

Proposition~\ref{p:poly3} implies that for every polybounded semigroup $X$ its minimal ideal $K=\bigcap_{x\in X}X^1xX^1$ is a simple polybounded semigroup. Our next aim is to show that for a polybounded semigroup with finite-to-one shifts, its minimal ideal $K$ is completely simple.

\begin{lemma}\label{l:minI} Every nonempty polybounded semigroup $X$ with finite-to-one shifts contains a minimal left ideal and a minimal right ideal.
\end{lemma}

\begin{proof} By Lemma~\ref{l:regular}, $X=\bigcup_{p\in P}\bigcup_{b\in B}p^{-1}(b)$ for some finite set $P$ of pruned polynomials and some finite set $B$ of regular elements.

To derive a contradiction, assume that the semigroup $X$ has no minimal right ideals. Then we can inductively construct sequences $(b_n)_{n\in\w}\in B^\w$ and $(x_n)_{n\in\w}\in X^\w$ such that for every $n\in\w$ the following conditions are satisfied:
\begin{itemize}
\item $x_n\in b_n X^1$ and $x_nX^1\ne b_nX^1$;
\item $b_{n+1}\in x_nX^1$.
\end{itemize}
To start the inductive construction of the sequences $(b_n)_{n\in\w}$ and $(x_n)_{n\in\w}$, take any element $b_0\in X$. Since $X$ has no minimal right ideals, the right ideal $b_0X^1$ is not minimal and hence there exists an element  $x_0\in b_0X^1$ such that $x_0X^1\ne b_0X^1$. Assume that for some $n\in\w$ we have constructed finite sequences $(b_k)_{k\le n}$ and $(x_k)_{k\le n}$. By the choice of the sets $P$ and $B$, there exist an element $b_{n+1}\in B$ and a pruned polynomial $p\in P$ such that $b_{n+1}=p(x_n)\in x_nX^1$. This completes the inductive step. Since the set $B$ is finite, there are two numbers $n<m$ such that $b_n=b_m$.

Then $$b_mX^1\subseteq x_{m-1}X^1\subset b_{m-1}X^1\subseteq x_{m-2}X^1\subseteq\ldots\subseteq b_nX^1$$and hence $b_mX^1\ne b_nX^1=b_mX^1$, which is a contradiction showing that $X$ has a minimal right ideal.

By analogy we can prove that $X$ has a minimal left ideal.
\end{proof}

\begin{proposition} If a nonempty polybounded semigroup $X$ has finite-to-one shifts, then its minimal ideal $K=\bigcap_{x\in X}X^1xX^1$ is a completely simple semigroup, which is the union $K=\bigcup_{e\in E(K)}eKe$ of finitely many polybounded groups $eKe$, $e\in E(X)$.
\end{proposition}

\begin{proof} By Proposition~\ref{p:poly3}, the semigroup $K=\bigcap_{x\in X}X^1xX^1$ is nonempty and polybounded. It is is clear that $K$ is simple and has finite-to-one shifts.  By Lemma~\ref{l:minI}, the polybounded semigroup $K$ has a minimal left ideal and a minimal right ideal and hence the simple semigroup $K$ is completely simple. By Theorem 3.3.2 in \cite{Howie}, the completely simple semigroup $S$ is completely regular and hence $K=\bigcup_{e\in E(K)}eKe$ is the union of subgroups.  By Proposition~\ref{p:poly-E}, the set $E(K)$ is finite, and by Proposition~\ref{p:poly1}, for every idempotent $e\in E(K)$ the group $eKe$ is polybounded.
\end{proof}

For a semigroup $X$ let
$$Z(X)=\{z\in X:\forall x\in X\;(xz=zx)\}$$be the {\em center} of $X$. We recall that a semigroup $X$ is called {\em bounded} if there exists $n\in\IN$ such that the $n$th power $x^n$ of any element $x\in X$ is an idempotent. It is clear that a bounded semigroup with finitely many idempotents is polybounded. The converse implication holds for commutative semigroups with finite-to-one shifts.

\begin{proposition}\label{p:bound-semibound} Let $X$ be a semigroup.
If $X$ has finite-to-one shifts and $Z(X)$ is polybounded in $X$, then $Z(X)$ is bounded.
\end{proposition}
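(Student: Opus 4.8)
The plan is to exploit that every power of a central element is again central, so that the polyboundedness of $Z(X)$ applies simultaneously to all of them, and then to extract from this a uniform bound on the size of the monogenic subsemigroup generated by an arbitrary element of $Z(X)$.

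First I would record how semigroup polynomials act on central elements. Write the polybound as $Z(X)\subseteq\bigcup_{i=1}^n f_i^{-1}(b_i)$ with $f_i(x)=a_{i,0}xa_{i,1}\cdots xa_{i,d_i}$ of degree $d_i$ and $b_i\in X$. For a central element $w$, the element $w$ commutes with every coefficient, so $f_i(w)=w^{d_i}c_i$ where $c_i=a_{i,0}a_{i,1}\cdots a_{i,d_i}\in X^1$. Hence $f_i(w)=b_i$ is equivalent to $w^{d_i}\in S_i:=\{y\in X:yc_i=b_i\}$, and each $S_i$ is finite (a singleton if $c_i=1$, and finite by the finite-to-one shift property if $c_i\in X$); thus $S:=\bigcup_i S_i$ is finite. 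Since $z^m\in Z(X)$ for every $z\in Z(X)$ and every $m\ge1$, applying the polybound to $z^m$ yields the crucial fact: for every $m\ge1$ there is an index $i$ with $z^{md_i}\in S$.

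The key step is the uniform bound $|\langle z\rangle|\le d(n|S|+1)$, where $d=\max_i d_i$, on the monogenic subsemigroup $\langle z\rangle=\{z^k:k\ge1\}$ generated by an arbitrary $z\in Z(X)$. I would argue by counting. If the powers $z^1,z^2,\dots$ were pairwise distinct, then for $m$ in any finite range $\{1,\dots,N\}$ the assignment $m\mapsto(i_m,z^{md_{i_m}})$, with $i_m$ chosen so that $z^{md_{i_m}}\in S$, is injective into the finite set $\{1,\dots,n\}\times S$ (injectivity because $z^{md_i}=z^{m'd_i}$ forces $md_i=m'd_i$, hence $m=m'$); letting $N\to\infty$ gives a contradiction, so $\langle z\rangle$ is finite. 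Writing $q=|\langle z\rangle|$, the powers $z^1,\dots,z^q$ are distinct, and the same injective assignment, applied to those $m\le q/d$ (for which $md_i\le q$, so that no wrap-around occurs and the equality $z^{md_i}=z^{m'd_i}$ still forces $m=m'$), shows $\lfloor q/d\rfloor\le n|S|$, that is, $q\le d(n|S|+1)$.

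Finally I would convert this uniform size bound into boundedness. Each finite monogenic semigroup $\langle z\rangle$ has an index $\ell_z$ and a period $p_z$, both at most $C:=d(n|S|+1)$, and $z^k$ is idempotent precisely when $k\ge\ell_z$ and $p_z\mid k$. Taking $n_*=C\cdot\mathrm{lcm}(1,2,\dots,C)$ we obtain $n_*\ge C\ge\ell_z$ and $p_z\mid n_*$ for every $z\in Z(X)$, whence $z^{n_*}$ is idempotent for all $z\in Z(X)$; thus $Z(X)$ is bounded. The main obstacle is precisely obtaining a bound on $|\langle z\rangle|$ that is \emph{uniform} over all $z\in Z(X)$: the finite-to-one shift property only yields finiteness of the individual sets $\{x:xa=b\}$, with no uniform size control, so a bound on the index $\ell_z$ cannot be read off from a single shift. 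What makes the counting work is that polyboundedness may be applied to every power $z^m$ at once, pinning $z^{md_i}$ inside the fixed finite set $S$ for all $m$ and forcing the exponents to collide after a controlled number of steps.
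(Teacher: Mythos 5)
Your proposal is correct and takes essentially the same approach as the paper: collapse each polynomial on central elements to the form $c\,x^{d}$, use the finite-to-one shift property to produce a fixed finite target set, apply the polybound to all powers of $z$ at once, and pigeonhole/count to force a collision $z^{md_i}=z^{m'd_i}$ among controlled exponents. The only difference is organizational --- you argue directly, bounding $|\langle z\rangle|$ uniformly and then making the index/period/lcm construction of the exponent explicit, whereas the paper runs the same count as a proof by contradiction and leaves that last conversion step implicit.
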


\begin{proof}  Assuming that $Z(X)$ is not bounded, we conclude that $Z(X)$ and $X$ are nonempty. By the polyboundedness of $Z(X)$ in $X$, for some $n\in\IN$ there exist  semigroup polynomials $f_1,\dots,f_n$ on $X$ and  elements $b_1,\dots,b_n\in X$ such that  $Z(X)\subseteq \bigcup_{i=1}^nf_i^{-1}(b_i)$. For every $i\leq n$ there exist a number $p_i\in\IN$ and an element $a_i\in X^1$ such that $f_i(x)=a_ix^{p_i}$ for all $x\in Z(X)$. Let $p=\max_{i\le n}p_i$. Since $X$ has finite-to-one shifts, the set $F=\bigcup_{i\leq n}\{x\in Z(X):a_ix=b_i\}$ is finite.
Since $Z(X)$ is not bounded, there exists an element $z\in Z(X)$ such that for any distinct numbers $i,j\in\{1,\dots,p(1+n|F|)\}$ the powers $z^i,z^j$ are distinct. Since $$Z(X)=\bigcup_{i=1}^n\{x\in Z(X):f_i(x)=b_i\}=\bigcup_{i=1}^n\{x\in Z(X):a_ix^{p_i}=b_i\},$$ for every $j\in \IN$ there exists $i_j\in\{1,\dots,n\}$ such that $a_{i_j}(z^j)^{p_{i_j}}=b_{i_j}$.
By the Pigeonhole principle, for some $i\in\{1,\dots,n\}$ the set $J_i=\{j\in\{1,\dots,1+n\cdot|F|\}:i =i_j\}$ has cardinality $|J_i|>|F|$. Then for every $j\in J_i$ we have $a_iz^{jp_i}=b_i$ and hence $z^{jp_i}\in F$. Since $|J_i|>|F|$, there are two numbers $j<j'$ in $J_i$ such that $z^{jp_i}=z^{j'p_i}$. Since $\max\{jp_{i},j'p_i\}\leq (1+n|F|)p$, we obtain a contradiction with the choice of $z$.
\end{proof}

We apply Proposition~\ref{p:bound-semibound} in the proofs of the following characterizations of polybounded commutative semigroups (with finite-to-one shifts).

\begin{proposition}\label{p:comb} A commutative semigroup $X$ with finite-to-one shifts is polybounded if and only if $X$ is bounded and the set $E(X)$ is finite.
\end{proposition}

\begin{proof} To prove the ``if'' part, assume that $X$ is bounded and has finitely many idempotents. By the boundedness of $X$, there exists $n\in\IN$ such that $X=\bigcup_{e\in E(X)}\{x\in X:x^n=e\}$, which means that $X$ is polybounded.

To prove the ``only if'' part, assume that $X$ is polybounded. Being commutative, the semigroup $X$ coincides with its center $Z(X)$. By Proposition~\ref{p:bound-semibound}, the semigroup $Z(X)=X$ is bounded and by Proposition~\ref{p:poly-E}, the set $E(X)$ is finite.
\end{proof}

\begin{proposition} A nonempty commutative semigroup $X$ is polybounded if and only if the semigroup $K=\bigcap_{x\in X}X^1xX^1$ is nonempty and bounded.
\end{proposition}

\begin{proof}
Assume that the {\color{blue}commutative} semigroup $X$ is {\color{green}\st{commutative and}} polybounded. Proposition~\ref{p:poly3} implies that the semigroup $K=\bigcap_{x\in X}X^1xX^1$ is nonempty and polybounded. Clearly, $K$ is the minimal ideal of $X$. Using the commutativity of $X$ it can be checked that $K=aK=Ka$ for any $a\in K$, {\color{blue}which implies that $K$ is a group, see \cite[p.\,5]{Clifford-Preston-1961}}. By Proposition~\ref{p:comb}, {\color{blue}the group} $K$ is bounded.


Now assume that the semigroup $K=\bigcap_{x\in X^1}X^1xX^1$ is bounded and nonempty. The above arguments imply that $K$ is a group. Proposition~\ref{p:comb} ensures that {\color{blue}group} $K$ is polybounded. 
By Proposition~\ref{p:poly3}, the semigroup $X$ is polybounded.
\end{proof}

The following simple example shows that bounded commutative semigroups are not necessarily polybounded.

\begin{example} The commutative semigroup $\mathbb \w$ with the operation of maximum has finite-to-one shifts and is bounded but not polybounded.
\end{example}

\begin{remark}
By Proposition~\ref{p:comb}, the infinite product of finite cyclic groups $\prod_{n\in\mathbb N}(\IZ/n\IZ)$ is not polybounded.
Also, Example~\ref{ex} shows that the polyboundedness is not inherited by taking subgroups,  whereas the boundedness is a hereditary property.
\end{remark}

\section{$\C$-closedness and polyboundedness}

In this section we study a relationship between polybounded and $\C$-closed semigroups. 

\begin{lemma}\label{t:cancel1} Let $X$ be a semigroup with finite-to-one shifts and $A$ be a polybounded subset in $X$. Then $A$ is closed in any  $T_1$ topological semigroup $Y$ that contains $X$ as a discrete subsemigroup.
\end{lemma}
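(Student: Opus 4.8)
The plan is to push every semigroup polynomial forward into the ambient semigroup $Y$, exploit that in a $T_1$ space the fibers of a continuous map over a point are closed, and then use finite-to-one shifts to kill any hypothetical limit point of $A$ lying outside $X$. Concretely, writing the polyboundedness of $A$ as $A\subseteq\bigcup_{i=1}^n\{x\in X:f_i(x)=b_i\}$ with $f_i(x)=a^i_0xa^i_1\cdots xa^i_{m_i}$ and $b_i\in X$, I would extend each $f_i$ to the continuous self-map $\bar f_i\colon Y\to Y$, $\bar f_i(y)=a^i_0ya^i_1\cdots ya^i_{m_i}$ (continuity being automatic since the coefficients lie in $X^1\subseteq Y^1$). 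As $Y$ is $T_1$, each singleton $\{b_i\}$ is closed, so $F_i:=\bar f_i^{-1}(b_i)$ is closed and hence $\overline A^{\,Y}\subseteq\bigcup_{i=1}^nF_i$. It then remains to show $\overline A^{\,Y}\subseteq A$.

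Fix $y\in\overline A^{\,Y}$. If $y\in X$, then discreteness gives an open $U\ni y$ with $U\cap X=\{y\}$, and $\emptyset\ne U\cap A\subseteq\{y\}$ forces $y\in A$; so the entire difficulty is to exclude $y\notin X$. Assume $y\notin X$. Since $A\subseteq X$ we have $y\in\overline X^{\,Y}$, and a routine $T_1$ observation excludes any neighbourhood $U$ of $y$ with $U\cap X$ finite (such a set would be closed yet have $y$ in its closure, forcing $y\in U\cap X\subseteq X$); thus \emph{every} neighbourhood of $y$ meets $X$ in an infinite set. Because $y\in\bigcup_iF_i$, I may fix $i$ with $\bar f_i(y)=b_i$ and abbreviate $m=m_i$, $a_j=a^i_j$.

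The crucial step is the following localization. Since $b_i$ lies in the discrete set $X$, I would choose an open $V\ni b_i$ with $V\cap X=\{b_i\}$, and by continuity of the product obtain an open $W\ni y$ with $a_0Wa_1W\cdots Wa_m\subseteq V$. As the coefficients lie in $X^1$ and $m\ge1$, every product $a_0x_1a_1\cdots x_ma_m$ with $x_1,\dots,x_m\in W\cap X$ lies in $X$, hence in $V\cap X=\{b_i\}$; that is, the multivariable product is \emph{constantly} $b_i$ on $(W\cap X)^m$. Freezing any chosen $x^*\in W\cap X$ in the first $m-1$ occurrences of the variable and letting the last vary, I get $a\,x\,c=b_i$ for every $x\in W\cap X$, where $a=a_0x^*a_1\cdots x^*a_{m-1}\in X^1$ and $c=a_m\in X^1$. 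The set $\{x\in X:axc=b_i\}$ is finite, being the image of a composition of at most two finite-to-one shifts, whereas it contains the infinite set $W\cap X$ — a contradiction. This rules out $y\notin X$, so $\overline A^{\,Y}\subseteq X$, and the discreteness argument of the previous paragraph upgrades this to $\overline A^{\,Y}\subseteq A$.

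The main obstacle — and the reason one cannot simply invoke finite-to-one shifts for $f_i$ itself — is that $f_i$ need \emph{not} have finite fibers: the variable occurs $m$ times, and maps such as $x\mapsto x^2$ have infinitely many roots of a fixed element even in groups with injective shifts. The decisive point is that a genuine limit point $y$ forces the product to be constant on an \emph{open box} $(W\cap X)^m$, after which freezing all but one occurrence of the variable collapses the multivariable identity to a single two-sided shift equation $axc=b_i$, to which finite-to-one shifts does apply. In carrying this out one should verify the degenerate case $m=1$ (where $a=a_0$, $c=a_1$ and no freezing is needed) and check that the frozen coefficient $a$ indeed lies in $X^1$, so that the two one-sided shifts it composes are finite-to-one.
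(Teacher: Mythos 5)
Your proof is correct and follows essentially the same route as the paper's: extend each $f_i$ to a continuous polynomial $\bar f_i$ on $Y$, use the $T_1$ axiom to trap $\overline{A}$ inside $\bigcup_{i}\bar f_i^{-1}(b_i)$, and then, for a hypothetical limit point $y\notin X$, combine discreteness of $X$ at $b_i$ with continuity of multiplication and the infinitude of $W\cap X$ to freeze all but one occurrence of the variable and contradict the finite-to-one shift property. The only (immaterial) difference is that you freeze the first $m-1$ occurrences of the variable and let the last one vary, whereas the paper freezes all occurrences except the first.
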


\begin{proof}
We lose no generality assuming that the semigroup $X$ is nonempty. Being polybounded in $X$, the set $A$ is contained in the union $\bigcup_{i=1}^m f_i^{-1}(b_i)$ for some semigroup polynomials $f_1,\dots,f_m$ on $X$ and some elements $b_1,\dots,b_m\in X$. Each semigroup polynomial $f_i$ is of the form $f_i(x)=a_{i,0}x\dots x a_{i,n_i}$ for some $n_i\in\IN$ and $a_{i,0},\dots, a_{i,n_i}\in X^1$. Let $\bar f_i:Y\to Y$ be the semigroup polynomial on $Y$, defined by $\bar f_i(y)=a_{i,0}y\cdots ya_{i,n_i}$ for $y\in Y$. Since the space $Y$ is $T_1$ and the polynomials $\bar f_i$, $i\leq m$, are continuous, the subset $\bigcup_{i=1}^m\bar f_i^{-1}(b_i)$ of $Y$ is closed and hence contains the closure of $A$ in $Y$.
Assuming that the set $A$ is not closed in $Y$,
take any element $y\in\overline{A}\setminus A$ and find $i\in\{1,\dots,m\}$ such that $\bar f_i(y)=b_i$. Taking into account that the subspace $X$ of $Y$ is discrete, we conclude that $y\in \overline A\setminus A\subseteq Y\setminus X$. By the discreteness of $X$, there exists an open set $V\subseteq Y$ such that $V\cap X=\{b_i\}$. By the continuity of the semigroup operation on $Y$, there exists an open neighborhood $U\subseteq Y$ of $y$ such that $a_{i,0}Ua_{i,1}\cdots Ua_{i,n_i}\subseteq V$. Since $y\in \overline{X}\setminus X$, the set $U\cap X$ is infinite. Fix any element $u\in U\cap X$ and observe that the set
$$\{x\in X:a_{i,0}x(a_{i,1}u\cdots ua_{i,n_i})=b_i\}\supseteq U\cap X$$is infinite. But this is impossible, because $X$ has finite-to-one shifts.
\end{proof}

Lemma~\ref{t:cancel1} implies the following:

\begin{corollary}\label{t:cancel}
Each polybounded semigroup with finite-to-one shifts is $\mathsf{T_{\!1}S}$-closed.
\end{corollary}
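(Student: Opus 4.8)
The plan is to read this off directly from Lemma~\ref{t:cancel1} by taking the polybounded subset to be the whole semigroup. First I would unwind the definition: to say that the semigroup $X$ is $\mathsf{T_{\!1}S}$-closed means, by the convention adopted in the paper, that $X$ carrying the discrete topology is a $\mathsf{T_{\!1}S}$-closed topological semigroup, i.e.\ for every isomorphic topological embedding $e:X\to Y$ into a topological semigroup $Y\in\mathsf{T_{\!1}S}$ the image $e[X]$ is closed in $Y$. So I fix an arbitrary such embedding $e:X\to Y$ and aim to show $e[X]$ is closed.

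Next I would verify that the hypotheses of Lemma~\ref{t:cancel1} are met. Since $X$ is polybounded, by definition $X$ is polybounded in $X$; hence the subset $A=X$ is a polybounded subset of the semigroup-with-finite-to-one-shifts $X$, which is exactly the input the lemma requires. The one point genuinely worth checking is that $Y$ contains (a copy of) $X$ as a \emph{discrete} subsemigroup: because $e$ is a topological embedding and $X$ is discrete, the subspace $e[X]\subseteq Y$ is homeomorphic to the discrete space $X$ and is therefore a discrete subspace of $Y$, while being an isomorphic embedding guarantees that $e[X]$ is a subsemigroup. Identifying $X$ with $e[X]$, the space $Y$ is a $T_1$ topological semigroup containing $X$ as a discrete subsemigroup.

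With both hypotheses in place, I would apply Lemma~\ref{t:cancel1} to the polybounded set $A=X$ inside $Y$ to conclude that $X=e[X]$ is closed in $Y$. As $Y\in\mathsf{T_{\!1}S}$ and $e$ were arbitrary, this shows $X$ is $\mathsf{T_{\!1}S}$-closed, completing the proof.

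I do not expect any serious obstacle here: the statement is a formal consequence of the lemma, and the only substantive verification is the translation between the ``isomorphic topological embedding'' formulation of $\mathsf{T_{\!1}S}$-closedness and the ``discrete subsemigroup'' hypothesis of Lemma~\ref{t:cancel1}, which is immediate from discreteness of $X$. All the real work (the use of finite-to-one shifts and the continuity of the semigroup polynomials $\bar f_i$) has already been carried out inside the lemma.
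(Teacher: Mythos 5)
Your proof is correct and matches the paper exactly: the paper derives Corollary~\ref{t:cancel} as an immediate consequence of Lemma~\ref{t:cancel1} with $A=X$, which is precisely your argument, only spelled out in more detail (the translation between the embedding formulation of $\mathsf{T_{\!1}S}$-closedness and the discrete-subsemigroup hypothesis of the lemma). No gaps.
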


\begin{remark} For every semigroup $S$ its $0$-extension $S^0$ is polybounded since $S^0=\{x\in S^0:0x=0\}$.  This trivial example shows that the finite-to-one shift property is essential in Lemma~\ref{t:cancel1} and Corollary~\ref{t:cancel}.
\end{remark}

\begin{theorem}\label{t:semibound}
Every zero-closed countable semigroup is polybounded.
\end{theorem}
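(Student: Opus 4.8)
The plan is to prove the contrapositive: if a countable semigroup $X$ is \emph{not} polybounded, then $X$ is not zero-closed. To witness non-zero-closedness I will equip $X^0$ with a $T_1$ semigroup topology in which $0$ is not isolated; then $0\in\overline X\setminus X$ and $X$ fails to be closed in $X^0$. Such a topology is exactly what Lemma~\ref{l0} produces, so the whole task reduces to constructing a countable family $\K$ of infinite subsets of $X$ satisfying conditions (1)--(4) of that lemma.

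First I would fix an infinite ``seed'' set $A\subseteq X$ (to be built below) and let $\K$ consist of all sets of the form $A\cup M_1(A)\cup\cdots\cup M_r(A)$, where each $M_i(A)=\{a_0t_1a_1\cdots t_ka_k:(t_1,\dots,t_k)\in A^k\}$ is the image of a multivariate monomial $M_i(x_1,\dots,x_k)=a_0x_1a_1\cdots x_ka_k$ with coefficients $a_j\in X^1$. Since $X^1$ is countable, this family is countable, and every member contains $A$, hence is infinite. Closure conditions (1) and (2) are then routine: a shift $aKb$ and a product $KL$ of such sets are again finite unions of monomial images (because $a\,M(A)\,b$ and $M(A)N(A)=(MN)(A)$ are monomial images), so $A\cup aKb$ and $A\cup KL$ lie in $\K$ and contain $aKb$, $KL$ respectively. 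The crucial observation is that conditions (3) and (4) both reduce to the single requirement on $A$:
\begin{itemize}
\item[$(\ast)$] for every multivariate monomial $M(x_1,\dots,x_k)=a_0x_1a_1\cdots x_ka_k$ with $a_j\in X^1$ and every $c\in X$, the set $\{(t_1,\dots,t_k)\in A^k:M(t_1,\dots,t_k)=c\}$ is finite.
\end{itemize}
Indeed, $\{x\in M(A):axb=c\}$ is the $M$-image of the fiber $\{\bar t\in A^k:a\,M(\bar t)\,b=c\}$, and $\{(y,z)\in M(A)\times N(A):yz=c\}$ injects into $\{(\bar s,\bar t)\in A^{k+l}:(MN)(\bar s,\bar t)=c\}$; both are finite by $(\ast)$ (the only missing case, $c=1\notin X$, is vacuous since monomial images lie in $X$), and (3), (4) follow by summing over the finitely many pieces.

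The heart of the argument, and the step I expect to be the main obstacle, is the recursive construction of an infinite $A=\{a_1,a_2,\dots\}$ satisfying $(\ast)$, and this is precisely where non-polyboundedness enters. I would enumerate all pairs $(M_j,c_j)$ of a monomial and a target in $X$, and build $A$ so that, from stage $j$ on, no new solution tuple of the equation $M_j=c_j$ is ever created. The point is that an infinite fiber would force infinitely many solution tuples whose maximal-index coordinate grows without bound, so it suffices to forbid, at each stage $n$, that $a_n$ complete any solution whose remaining coordinates already lie in the finite set $A_{n-1}=\{a_1,\dots,a_{n-1}\}$. Fixing a nonempty set of coordinate positions for $a_n$ and filling the rest from $A_{n-1}$ turns each equation $M_j(\bar t)=c_j$ into finitely many single-variable conditions $q(a_n)=c_j$, where $q$ is a genuine semigroup polynomial with coefficients in $X^1$. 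Collecting these over all $j\le n$, all choices of positions, and all fillings from $A_{n-1}$, and adjoining the demands $a_n\ne a_i$ for $i<n$, I obtain a finite list of polynomial equations $q_s(x)=d_s$; since $X$ is not polybounded, $X\ne\bigcup_s q_s^{-1}(d_s)$, so an admissible element $a_n$ exists. The resulting $A$ is infinite, and every solution tuple of each $M_j=c_j$ has maximal index below $j$, hence there are only finitely many of them, giving $(\ast)$.

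Finally, applying Lemma~\ref{l0} to this $\K$ yields a Hausdorff (in particular $T_1$) semigroup topology $\tau^0$ on $X^0$ whose unique non-isolated point is $0$. Thus $0\in\overline X\setminus X$ in $(X^0,\tau^0)$, so $X$ is not closed in $X^0$ and is therefore not zero-closed, which completes the contrapositive and hence the proof.
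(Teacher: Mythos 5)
Your proof is correct and takes essentially the same route as the paper's: both argue the contrapositive by feeding Lemma~\ref{l0} the family of monomial images of an inductively constructed infinite set $A\subseteq X$, where at each step non-polyboundedness guarantees that the finitely many forbidden single-variable polynomial equations (obtained by filling in previously chosen elements) cannot cover $X$. The differences are purely organizational: the paper enumerates the elements of $X$ and performs the position-grouping reduction when verifying conditions (3)--(4) of Lemma~\ref{l0}, whereas you enumerate (monomial, target) pairs and build that reduction into the construction, so that (3)--(4) follow at once from your condition $(\ast)$.
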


\begin{proof}
Assume that a countable semigroup $X$ is not polybounded. Let $(b_n)_{n\in\IN}$ be an enumeration of elements of $X$.
We shall construct inductively a sequence $A=\{x_n\}_{n\in\omega}\subseteq X$ such that for each $n\in\omega$ the point $x_n$ satisfies the following condition:
\begin{itemize}
\item[$(*_n)$] for every $k\le n$ and elements $a_0,\dots,a_{k},a_{k+1}\in(\{1\}\cup\{b_i\}_{1\le i\le n}\cup\{x_i\}_{i<n})^{n}\subseteq X^1$ we have $f(x_n)\ne a_{k+1}$ where $f(x)=a_0xa_1\cdots xa_k$.
\end{itemize}
To start the inductive construction, choose any point $x_0\in X$ and observe that the condition $(*_0)$ is satisfied,  as $1\notin X$. 
Assume that for some $n\in\w$ we have already chosen pairwise distinct elements $x_0,\dots,x_n$ such that for each $i\leq n$ the condition $(*_i)$ is satisfied. The finiteness of the set $F_{n+1}=\big(\{1\}\cup\{b_k:1\le k\le n+1\}\cup\{x_k:k\leq n\}\big)^{n+1}$ implies that there exist only finitely many semigroup polynomials of the form $f(x)=a_0xa_1\dots xa_k$, where $k\leq n+1$ and $\{a_0,\ldots, a_{k}\}\subseteq F_{n+1}$.
Since the semigroup $X$ is not polybounded, there
exists an element $x_{n+1}\in X\setminus\{x_i:i\leq n\}$ which satisfies the condition $(*_{n+1})$. After completing the inductive construction we obtain
the desired set $A=\{x_n\}_{n\in\omega}$.



Consider the countable family
$$\mathcal K=\bigcup_{n=1}^\infty\{a_0Aa_1\cdots Aa_n:a_0,\dots,a_n\in X^1\}$$of subsets of $X$. Since
 $$(a_0Aa_1\cdots Aa_n)\cdot(b_0Ab_1\cdots Ab_m)=a_0Aa_1\cdots A(a_nb_0)Ab_1\cdots Ab_m$$ and $$b(a_0Aa_1\cdots Aa_n)c=(ba_0)Aa_1\cdots A(a_nc),$$the family $\mathcal K$ satisfies conditions (1) and (2) of Lemma~\ref{l0}.
\smallskip

To show that $\mathcal K$ satisfies condition (3) of Lemma~\ref{l0}, fix any set $K\in\mathcal K$ and elements $a,b,c\in X^1$. Find $n\in\IN$ and elements $a_0,\dots,a_n\in X^1$ such that $K=a_0Aa_1\cdots Aa_n$. Next, find $m\ge 2n$ such that $$\{aa_0,a_nb,c\}\cup\{a_0,a_1,\dots,a_n\}\subseteq\{b_0,\dots,b_m\}.$$ We claim that $\{z\in K:azb=c\}\subseteq\{a_0x_{i_1}a_1\cdots x_{i_n}a_n:i_1,\dots,i_n<m\}$. In the opposite case there exists an element $z\in K$ such that $azb=c$ and $z=a_0x_{j_1}a_1\cdots x_{j_n}a_n$ for some numbers $j_1,\dots,j_n\in\w$ with $j:=\max\{j_1,\dots,j_n\}\ge m$. Let $P=\{p\leq n: j_p=j\}$ and write $P$ as $P=\{p(1),\dots,p(t)\}$ for some numbers $p(1)<\ldots<p(t)$. Let
$$a_0'=aa_0x_{j_1}a_1\cdots x_{j_{p(1)-1}}a_{p(1)-1},\qquad a_t'=a_{p(t)}x_{j_{p(t)+1}}a_{p(t)+1}\cdots x_{j_n}a_{n}b,$$and for every $0<i<t$ put
$$a_i'=a_{p(i)}x_{j_{p(i)+1}}a_{p(i)+1}\cdots x_{j_{p(i+1)-1}}a_{p(i+1)-1}.$$

It follows that $$azb=aa_0x_{j_1}a_1\cdots x_{j_n}a_nb=f(x_j)$$for the semigroup polynomial $f(x)=a'_0xa_1'\cdots xa_t'$. Observe that
$$\{c,a_0',\dots,a'_t\}\subset (\{1\}\cup\{aa_0,a_nb,c\}\cup\{a_i\}_{i\le n}\cup\{x_i\}_{i<j})^{2n-1}\subseteq(\{1\}\cup\{b_i\}_{1\leq i\le j}\cup\{x_i\}_{i<j})^j.$$
Then $c=azb=f(x_j)$ which is not possible, since $x_j$ satisfies the condition $(*_j)$. This contradiction shows that the set $\{z\in K:azb=c\}\subseteq\{a_0x_{i_1}a_1\cdots x_{i_n}a_n:i_1,\dots,i_n< m\}$ is finite and hence the family $\mathcal K$ satisfies condition (3) of Lemma~\ref{l0}.
\smallskip

To show that $\mathcal K$ satisfies the condition (4) of Lemma~\ref{l0}, fix any $c\in X$ and sets $K,L\in\mathcal K$. Find elements $a'_0,\dots,a'_k,a''_0,\dots,a''_l\in X^1$ such that $K=a'_0Aa_1'\cdots Aa_k'$ and $L=a''_0Aa_1''\cdots Aa''_l$. Find a number $m\ge 2(k+l)$ such that $$\{a'_ka''_0,c\}\cup\{a'_0,\dots,a'_k\}\cup\{a''_0,\dots,a''_l\}\subseteq\{b_0,\dots,b_m\}.$$ Repeating the argument of the proof of condition (3), we can show that the set $$\{(x,y)\in K\times L:xy=c\}\subseteq\{(a'_0x_{i_1}\cdots x_{i_k}a'_k,a''_0x_{j_1}\cdots x_{j_l}a''_l):\max\{i_1,\dots,i_k,j_1,\dots,j_l\}<m\}$$  is finite.

Applying Lemma~\ref{l0}, we conclude that the semigroup $X$ is not zero-closed.
\end{proof}

\begin{remark} By \cite{BanS}, for every infinite cardinal $\kappa$ with $\kappa^+=2^\kappa$ there exists a non-polybounded absolutely $\mathsf{T_{\!1}S}$-closed group of cardinality $\kappa^+$. This  shows that Theorem~\ref{t:semibound} does not generalize to semigroups of arbitrary cardinality. On the other hand,  according to \cite{ZeroPoly}, the set-theoretic assumption $\mathrm{cov}(\mathcal M)=\mathfrak c$ implies that a zero-closed semigroup $X$ is polybounded if $X$ admits a compact Hausdorff semigroup topology or $X$ has a separable complete subinvariant metric. Here $\mathrm{cov}(\mathcal M)$ is the smallest cardinality of a cover of the real line by nowhere dense subsets. The Baire Theorem implies that $\w_1\le\mathrm{cov}(\mathcal M)\le\mathfrak c$. By Theorem 7.13 in \cite{Blass}, the equality  $\mathrm{cov}(\mathcal M)=\mathfrak c$ is equivalent to  Martin's Axiom for countable posets.
\end{remark}

\section{Proofs of the main results}\label{s:main}

{\em Proof of Theorem~\ref{t:group}:} Given a nonempty polybounded cancellative semigroup $X$, we should prove that $X$ is a group. By Lemma~\ref{l:regular}, $X=\bigcup_{p\in P}\bigcup_{b\in B}p^{-1}(b)$ for some  finite set $P$ of pruned semigroup polynomials on $X$ and some finite set $B$ of regular elements of $X$. Then for every $b\in B$ we can find an element $b^{-1}\in X$ such that $bb^{-1}b=b$. It follows that the elements $bb^{-1}$ and $b^{-1}b$ are idempotents.

By the cancellativity, $X$ contains a unique idempotent. Indeed, for any idempotents $e,f$ in $X$, the equalities $e(ef)=(ee)f=ef=e(ff)=(ef)f$ imply $f=ef=e$. Now we see that $X$ contains a unique idempotent $e$ and every element $b\in B$ is invertible in the sense that $bb^{-1}=e=b^{-1}b$ for some element $b^{-1}\in X$. By the cancellativity, for every $x\in X$ the equalities $xe=x(ee)=(xe)e$  and $ex=(ee)x=e(ex)$ imply $xe=x=ex$, which means that $e$ is the unit of the semigroup $X$.

For every $x\in X$ we can find $p\in P$ and $b\in B$ such that $p(x)=b$. Since the semigroup polynomial $p$ is pruned, $p(x)=xy$ for some $y\in X$. Then $$x(yb^{-1})x=(xy)b^{-1}x=p(x)b^{-1}x=bb^{-1}x=ex=x,$$ which means that the semigroup $X$ is regular. By \cite[Ex.11]{Howie}, the regular cancellative semigroup $X$ is a group.
\smallskip

{\em Proof of Theorem~\ref{char}:} Given a  countable semigroup $X$ with finite-to-one shifts, we should prove the equivalence of the following conditions:
\begin{enumerate}
\item $X$ is $\mathsf{T_{\!1}S}$-closed;
\item $X$ is $\mathsf{T_{\!z}S}$-closed;
\item $X$ is zero-closed;
\item $X$ is polybounded.
\end{enumerate}
The implications $(1)\Ra (2)\Ra (3)$ are trivial. The implications $(3)\Ra (4)$ and $(4)\Ra (1)$ follows from Theorem~\ref{t:semibound} and Corollary~\ref{t:cancel}, respectively. 
\smallskip

{\em Proof of Theorem~\ref{t:C=pC}:} Let $\mathsf i\in\{1,2,3,3\frac12,\mathsf z\}$ and $X$ be a semigroup whose all homomorphic images have finite-to-one shifts. Assuming that $X$ is $\mathsf{T_{\!i}S}$-closed, we will prove that $X$ is projectively $\mathsf{T_{\!i}S}$-closed. Let $h:X\to Y$ be any homomorphism to a topological semigroup $Y\in\mathsf{T_{\!i}S}$ such that $h[X]$ is a discrete subsemigroup of $Y$. We should prove that $h[X]$ is closed in $Y$. Since $\overline{h[X]}$ is a topological semigroup in the class $\mathsf{T_{\!\mathsf i}S}$, we lose no generality assuming that $h[X]$ is dense in $Y$.

 By our assumption, the semigroup $h[X]$ has finite-to-one shifts. We claim that $I=Y\setminus h[X]$ is an ideal in $Y= \overline{h[X]}$. Assuming the opposite, we can find $x\in I$ and $y\in Y$ such that $xy\notin I$ or $yx\notin I$. First we consider the case $xy\notin I$. It follows from $xy\notin I$ that $xy\in h[X]$. Since $h[X]$ is a discrete subspace of $Y$, there exists an open neighborhood $O_{x,y}\subseteq Y$ of $xy$ in $Y$ such that $O_{xy}\cap h[X]=\{xy\}$. By the continuity of the semigroup operation on $Y$, there exist neighborhoods $O_x$ and $O_y$ of the points $x$ and $y$ in $Y$ such that $O_x\cdot O_y\subseteq O_{xy}$. Choose any $a\in O_x\cap h[X]$ and observe that $a(O_y\cap h[X])\subseteq O_{xy}\cap h[X]=\{xy\}$. Since the set $O_y\cap h[X]$ is infinite, the semigroup $h[X]$ fails to have finite-to-one shifts. This contradiction shows that $xy\in I$. By analogy we can show that $yx\in I$. So, $I$ is an ideal in $Y$.

 Consider the semigroup $U_h(X,Y):= X\cup(Y\setminus h[X])$ that contains $X$ and $Y\setminus h[X]$ as subsemigroups and such that for any $x\in X$ and $y\in Y\setminus h[X]$, the products $xy$ and $yx$ in $U_h(X,Y)$ are defined by $xy=h(x)y$ and $yx=yh(x)$ in $Y\setminus h[X]$. The semigroup $U_h(X,Y)$ is endowed with the topology $\tau$ consisting of the sets $W\subseteq U_h(X,Y)$ such that  for any $y\in Y\setminus h[X]$ there exists a neighborhood $V_y$ of $y$ in $Y$ such that $(V_y\setminus h[X])\cup h^{-1}[V_y]\subseteq W$. By \cite[Theorem 18]{Ban}, $U_h(X,Y)$ is a topological semigroup in the class $\mathsf{T_{\!i}S}$ and $X$ is a discrete subsemigroup of $U_h(X,Y)$. Since the semigroup $X$ is $\mathsf{T_{\!i}S}$-closed, the set $X$ is closed in $U_h(X,Y)$ and consequently, $h[X]$ is closed in $Y$, witnessing that the semigroup $h[X]$ is projectively $\mathsf{T_{\!i}S}$-closed.

\smallskip

{\em Proof of Theorem~\ref{main2}:} We should prove that a countable semigroup $X$ with finite-to-one shifts is injectively $\mathsf{T_{\!1}S}$-closed if and only if $X$ is $\mathsf{T_{\!1}S}$-discrete. The ``only if'' part of this characterization follows from Proposition~\ref{p:iT1} (and requires no assumption on the semigroup $X$). To prove the ``if'' part,  assume that the semigroup $X$ is  $\mathsf{T_{\!1}S}$-discrete. By Podewski--Taimanov  Theorem~\ref{t:Podewski}, the space $(X,\Zeta'_X)$ is discrete, and by Lemma~\ref{new}, the semigroup $X$ is polybounded.  Corollary~\ref{t:cancel} implies that the polybounded semigroup $X$ is $\mathsf{T_{\!1}S}$-closed and Proposition~\ref{p:iT1} implies that $X$ is injectively $\mathsf{T_{\!1}S}$-closed.
\smallskip

{\em Proof of Theorem~\ref{tag}:} We should prove that a countable cancellative semigroup $X$ is absolutely $\mathsf{T_{\!1}S}$-closed if and only if $X$ is projectively $\mathsf{T_{\!1}S}$-discrete. The ``only if'' part of this characterization follows from Proposition~\ref{p:aT1S} (and requires no assumptions on the semigroup $X$). To prove the ``if'' part,  assume that the semigroup $X$ is  projectively $\mathsf{T_{\!1}S}$-discrete. We lose no generality assuming that the semigroup $X$ is nonempty. By Podewski--Taimanov Theorem~\ref{t:Podewski}, the space $(X,\Zeta'_X)$ is discrete, and by Lemma~\ref{new}, the semigroup $X$ is polybounded.
By Theorem~\ref{t:group}, the nonempty polybounded cancellative semigroup $X$ is a group. To prove that the group $X$ is absolutely $\mathsf{T_{\!1}S}$-closed, take any homomorphism $h:X\rightarrow Y$ into a $T_1$ topological semigroup $Y$. The projective $\mathsf{T_{\!1}S}$-discreteness of the group $X$ implies the $\mathsf{T_{\!1}S}$-discreteness of the group $h[X]\subseteq Y$. By Podewski--Taimanov Theorem~\ref{t:Podewski}, the space $(h[X],\Zeta'_{h[X]})$ is discrete, and by Lemma~\ref{new}, the group $h[X]$ is polybounded. Corollary~\ref{t:cancel} ensures that the group $h[X]$ is closed in $Y$.
\smallskip

{\em Proof of Theorem~\ref{t:aC-cgrp}:} Given a commutative cancellative semigroup $X$, and a class $\C$ of topological semigroups with $\mathsf{T_{\!z}S}\cap\mathsf{TG}\subseteq\C\subseteq\mathsf{T_{\!1}S}$, we should prove the equivalence of the following conditions:
\begin{enumerate}
\item $X$ is absolutely $\C$-closed;
\item $X$ is injectively $\C$-closed;
\item $X$ is $\C$-discrete;
\item $X$ is finite.
\end{enumerate}

Since (4) trivially implies  (1)--(3), and $(1)\Ra(2)$, it suffices to prove that the negation of (4) implies the negations of the conditions (2) and (3). So, assume that $X$ is an infinite commutative cancellative semigroup.  By \cite[p.34]{Clifford-Preston-1961}, the cancellative commutative semigroup $X$ is a subsemigroup of a commutative group $G$ such that $G=X-X$. Separately we consider two cases.
\smallskip

First assume that the group $G$ is bounded. Then the bounded subsemigroup $X$ of $G$ is a subgroup of $G$ and hence $G=X-X=X$. By Baer--Pr\"ufer Theorem 17.2 \cite{Fuchs}, the bounded group $G$ is isomorphic to the direct sum $\oplus_{\alpha\in\kappa}G_\alpha$ of finite cyclic groups $G_\alpha$. Since $X$ is infinite, the indexing cardinal $\kappa$ is infinite, too. Taking into account that $\oplus_{\alpha\in\kappa}G_\alpha$ is a non-closed non-discrete subgroup of the zero-dimensional compact topological group $\prod_{\alpha\in\kappa}G_\alpha\in\mathsf{T_{\!z}S}\cap\mathsf{TG}\subseteq\C$, we conclude that the semigroup $X=G$ is not injectively $\C$-closed and not $\C$-discrete.
\smallskip

Next, assume that the group $G$ is unbounded. Then the semigroup $X$ is unbounded as well. Choose any countable divisible subgroup $D$ of the multiplicative group $\IT=\{z\in\IC:|z|=1\}$ such that $\{z\in\IC:\exists n\in\IN\;(z^n=1)\}\subseteq D$ and $D$ contains an element of infinite order.

Denote by $H$ the set of all homomorphisms from the group $G$ to the group $D$. By Baer Theorem \cite[21.1]{Fuchs}, the diagonal homomorphism $\delta:G\to D^H$,
$\delta:x\mapsto (\varphi(x))_{\varphi\in H}$, is
injective. Endow the group $D^H$ with the subspace topology inherited from the compact topological group $\IT^H$.

We claim that the space $\delta[X]$ has no isolated points. In the opposite case, we could find a point $x\in X$ and an open set $U$ in $\IT^H$ such that $\delta[X]\cap U=\{\delta(x)\}$. Find an open neighborhood $V$ of the identity $e$ in the compact topological group $\IT^H$ such that $\delta(x)VV^{-1}\subseteq U$. By the compactness of the topological group $\IT^H$, there exists a finite set $F\subseteq\IT^H$ such that $\IT^H=VF$. Since the semigroup $X$ is unbounded, there exists an element $b\in X$ such that $e\ne \delta(b^i)\ne \delta(b^j)$ for any positive numbers $i<j\le|F|+1$. Since $\{\delta(b^i):1\le i\le |F|+1\}\subseteq VF$, by the Pigeonhole Principle, there exist $f\in F$ and positive numbers $i<j\le|F|+1$ such that $\{\delta(b^i),\delta(b^j)\}\subseteq  Vf$. Then $\delta(b^{j-i})=\delta(b^j)\delta(b^i)^{-1}\in Vf(Vf)^{-1}=VV^{-1}$ and then $\delta(xb^{j-i})\in\delta[X]\cap\delta(x)VV^{-1}\subseteq\delta[X]\cap U=\{\delta(x)\}$ and hence $\delta(b^{j-i})=e$, which contradicts the choice of $b$. This contradiction shows that the subspace $\delta[X]$ of the zero-dimensional Hausdorff group $D^H$ has no isolated points and hence the semigroup $X$ is not $\C$-discrete.

To show that $X$ is not injectively $\C$-closed, we need  the following claim.

\begin{claim}\label{cl:hom-inf} There exists a homomorphism $h:X\to D$ whose image $h[X]$ is infinite.
\end{claim}

\begin{proof} If the semigroup $X$ has an element $x$ of infinite order, then we can apply Baer Theorem \cite[21.1]{Fuchs} on extension of homomorphisms into  divisible groups and find a homomorphism $h:G\to D$ such that $h(x)$ is the element of infinite order in $D$. In this case the set $h[X]\supseteq\{h(x^n):n\in\IN\}$ is  infinite.

If every element of the semigroup $X$ has finite order, then $X=G$ is an unbounded periodic group. In this case a homomorphism $h:G\to D$ with infinite image $h[G]=h[X]$ exists by Claim 5.10 from \cite{BB}.
\end{proof}

Let $\overline{\delta[X]}$ be the closure of the semigroup $\delta[X]$ in the compact topological group $\IT^H$. For every $h\in H$ let $\pr_h:\IT^H\to\IT$, $\pr_h:y\mapsto y(h)$, denote the $h$-coordinate projection. By Claim~\ref{cl:hom-inf}, there exists a homomorphism $\varphi\in H$ whose image $\varphi[X]$ is infinite and hence $\varphi[X]\subseteq D$ is a dense subsemigroup of the compact topological group $\IT$. Observe that $\varphi=\pr_\varphi\circ\delta$ and hence $\varphi[X]=\pr_\varphi[\delta[X]]\subseteq \pr_\varphi[\overline{\delta[X]}]\subseteq\overline{\pr_\varphi[\delta[X]]}=\overline{\varphi[X]}=\IT$. The compactness of the set $\pr_\varphi[\overline{\delta[X]}]$ and the density of $\varphi[X]$ in $\IT$ imply that $\pr_\varphi[\overline{\delta[X]}]=\IT$. So, we can choose an element $y\in\overline{\delta[X]}\subseteq\IT^H$ such that $\pr_\varphi(y)\in\IT\setminus D$ and hence $y\in\overline{\delta[X]}\setminus\delta[X]$.

For every $h\in H$, let $D_h$ be the countable subgroup of $\IT$, generated by the set $D\cup\{\pr_h(y)\}$. Consider the group $\Pi=\prod_{h\in H}D_h$ endowed with the topology inherited from the compact topological group $\IT^H$. It is easy to see that $\Pi$ is a zero-dimensional topological group containing $\delta[X]$ as a non-closed subsemigroup and witnessing that the semigroup $X$ is not injectively $\C$-closed.

\section{Application of the polyboundedness to paratopological groups}
In this section we  present some applications of polyboundedness to paratopological groups.


A {\em paratopological group} is a group endowed with a semigroup topology. A paratopological group $G$ is a {\em topological group} if the operation of taking the inverse $G\to G$, $x\mapsto x^{-1}$, is continuous. In this case the topology of $G$ is called a {\em group topology}.

The problem of automatic continuity of the inversion in paratopological groups goes back to the classical work of Ellis~\cite{Ellis}. This problem was investigated by many authors (see surveys~\cite{BR} and~\cite{Tka} for more information).  A typical result says that the continuity of the inversion in paratopological groups follows from a suitable property of compactness type. For example,  a paratopological group $G$ is topological if $G$ possesses one of the following properties: locally compact, sequentially compact, totally countably compact, regular feebly compact or quasiregular 2-pseudocompact~\cite{Tka}.

The next proposition shows that every polybounded $T_1$ paratopological group is topological.

\begin{proposition}\label{p:topg}  Let $G$ be a $T_1$ paratopological group and $H\subseteq G$ be a subgroup which is polybounded in $G$. Then $H$ is a topological group.
\end{proposition}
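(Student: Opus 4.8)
The plan is to show that, with the subspace topology inherited from $G$, the subgroup $H$ is a paratopological group whose inversion $\iota\colon H\to H$, $\iota\colon x\mapsto x^{-1}$, is continuous; by definition this makes $H$ a topological group. (Continuity of the multiplication on $H$ is automatic, being the restriction of the continuous multiplication of $G$.) Since a paratopological group is in particular a group, I would first simplify the data: replacing every occurrence of the adjoined unit $1$ in the coefficients by the identity $e$ of $G$ does not change the polynomials as maps $G\to G$, so I may assume that $H\subseteq\bigcup_{i=1}^{m}f_i^{-1}(b_i)$ for elements $b_1,\dots,b_m\in G$ and semigroup polynomials $f_i(x)=a_{i,0}xa_{i,1}\cdots xa_{i,n_i}$ with all coefficients $a_{i,j}\in G$ (hence invertible) and all degrees $n_i\ge 1$. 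Writing $H_i=H\cap f_i^{-1}(b_i)$, we have $H=\bigcup_{i=1}^m H_i$.

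The key step is a purely algebraic identity showing that on each solution set the inverse is given by a single continuous formula: for every $i\le m$ and every $x\in G$ with $f_i(x)=b_i$,
$$x^{-1}=a_{i,1}xa_{i,2}\cdots xa_{i,n_i}\,b_i^{-1}a_{i,0}=:\psi_i(x).$$
To see this, left-cancel $a_{i,0}$ in $a_{i,0}xa_{i,1}\cdots xa_{i,n_i}=b_i$ and split off the leading factor $x$ to obtain $x\cdot(a_{i,1}xa_{i,2}\cdots xa_{i,n_i})=a_{i,0}^{-1}b_i$; inverting gives $x^{-1}=(a_{i,1}xa_{i,2}\cdots xa_{i,n_i})(a_{i,0}^{-1}b_i)^{-1}=\psi_i(x)$. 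Each $\psi_i$ is a semigroup polynomial on $G$ (a constant map when $n_i=1$) and hence continuous.

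To finish, I would check continuity of $\iota$ at an arbitrary point $x_0\in H$. Fix a neighbourhood $W\subseteq G$ of $x_0^{-1}$. Since $G$ is $T_1$, each singleton $\{b_i\}$ is closed, so each $f_i^{-1}(b_i)$ is closed in $G$. Let $I=\{i\le m: f_i(x_0)=b_i\}$; because $x_0\in H\subseteq\bigcup_i H_i$, the set $I$ is nonempty, and $i\in I$ exactly when $x_0\in H_i$. For $i\notin I$ the point $x_0$ lies in the open set $G\setminus f_i^{-1}(b_i)$, which is disjoint from $H_i$; for $i\in I$ the identity gives $\psi_i(x_0)=x_0^{-1}\in W$, so continuity of $\psi_i$ yields a neighbourhood $V_i$ of $x_0$ with $\psi_i[V_i]\subseteq W$. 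The finite intersection
$$V=\Big(\bigcap_{i\in I}V_i\Big)\cap\Big(\bigcap_{i\notin I}\big(G\setminus f_i^{-1}(b_i)\big)\Big)$$
is then a neighbourhood of $x_0$, and every $x\in H\cap V$ lies in some $H_i$ with necessarily $i\in I$ (as $V$ meets no $H_j$ with $j\notin I$), whence $x^{-1}=\psi_i(x)\in\psi_i[V_i]\subseteq W$. Thus $(H\cap V)^{-1}\subseteq W$, establishing continuity of $\iota$ at $x_0$.

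I expect the main obstacle to be discovering the key identity, which converts the algebraic content of polyboundedness into continuity of inversion; once it is in hand, the only subtlety is that elements of $H$ arbitrarily close to $x_0$ may satisfy different equations $f_i=b_i$, so one must localize to the finitely many indices $i\in I$ and use the closedness of the remaining solution sets to discard the equations failing at $x_0$. Note that only the $T_1$ axiom is needed, and solely to guarantee that the sets $f_i^{-1}(b_i)$ are closed.
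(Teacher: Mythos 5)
Your proof is correct, and its algebraic core coincides with the paper's: your identity $x^{-1}=\psi_i(x)=a_{i,1}xa_{i,2}\cdots xa_{i,n_i}b_i^{-1}a_{i,0}$ is precisely the paper's opening reduction, which rewrites $f_i^{-1}(b_i)$ as $\tilde f_i^{-1}(e)$ for $\tilde f_i(x)=xg_i(x)$, with $g_i$ equal to your $\psi_i$, so that $x^{-1}\in\{g_i(x):i\le n\}$ for all $x\in H$. Where you genuinely diverge is in the topological half. The paper verifies continuity of inversion only at the identity: using $T_1$ it shrinks $U$ so that $g_i(e)^{-1}\notin U$ whenever $g_i(e)\ne e$, chooses $W$ with $WW\subseteq U$ and $V$ with $g_i[V]\subseteq Wg_i(e)$, and derives a contradiction showing that any equation realized on $H\cap V$ must have $g_i(e)=e$; it then relies (implicitly) on the standard fact that a paratopological group whose inversion is continuous at the identity is a topological group, translations being homeomorphisms. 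You instead verify continuity at an arbitrary point $x_0\in H$, and you use $T_1$ for a different purpose: to make each solution set $f_i^{-1}(b_i)$ closed, so that after discarding the finitely many equations not satisfied at $x_0$, every point of $H$ near $x_0$ satisfies an equation that is satisfied at $x_0$, where the continuous formula $\psi_i$ finishes the job. Your route costs some extra bookkeeping (the index set $I$ and the localization step) but is self-contained, avoiding the reduction-to-the-identity fact, and it actually proves slightly more: inversion restricted to any subset of $G$ that is polybounded in $G$ is continuous as a map into $G$, the subgroup hypothesis serving only to ensure that inverses of elements of $H$ lie in $H$.
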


\begin{proof} Being polybounded, the subgroup $H$ is contained in the union $\bigcup_{i=1}^nf_i^{-1}(b_i)$ for some semigroup polynomials $f_1,\dots,f_n:G\to G$ and some elements $b_1,\dots,b_n\in G$. 
Observe that if $f_i(x)=a_0xa_1\cdots xa_n$ for some $a_0,\dots,a_n\in G$, then $f_i^{-1}(b_i)=\tilde f_i^{-1}(e)$ for the semigroup polynomial $\tilde f_i(x)=xa_1\cdots xa_nb_i^{-1}a_0$. So, we can assume that for every $i\le n$ there exists a semigroup polynomial or a constant self-map $g_i$ of $X$ such that $f_i(x)=xg_i(x)$ for all $x\in G$, and $b_i=e$. It follows from 	$$H\subseteq \bigcup_{i=1}^n f_i^{-1}(e)=\bigcup_{i=1}^n\{x\in G:xg_i(x)=e\}$$ that
$x^{-1}\in\{g_i(x)\}_{i\le n}$ for every $x\in H$.

To prove that $H$ is a topological group we should prove that for any open neighborhood $U\subseteq G$ of $e$ there exists a neighborhood $V\subseteq G$ of $e$ such that $(H\cap V)^{-1}\subseteq U$. Fix any neighborhood $U$ of $e$ in $G$.
Since $G$ satisfies the separation axiom $T_1$, we can replace $U$ by a smaller neighborhood and assume that for every $i\leq n$, $g_i(e)^{-1}\notin U$, whenever $g_i(e)\ne e$. By the continuity of the semigroup operation in $G$, there exists a neighborhood $W\subseteq U$ of $e$ such that $WW\subseteq U$.
By the continuity of the functions $g_1,\dots, g_n$, there exists a neighborhood $V\subseteq W$ of $e$ such that for any $i\in\{1,\dots,n\}$ we have $g_i[V]\subseteq Wg_i(e)$. We claim that  $(H\cap V)^{-1}\subseteq U$. Indeed, fix any $x\in H\cap  V$. By the choice of the functions $g_1,\dots,g_n$, there exists $i\in\{1,\dots,n\}$ such that $xg_i(x)=e$. Then
$$e=xg_i(x)\in Vg_i[V]\subseteq  VW g_i(e)\subseteq WW g_i(e)\subseteq Ug_i(e).$$
Recall that if $g_i(e)\neq e$, then, by the choice of $U$, $g_i(e)^{-1}\notin U$. Consequently, $e\notin Ug_i(e)$ which contradicts to the above inclusion. Hence
$g_i(e)=e$. Finally, $$x^{-1}=g_i(x)\in g_i[V]\subseteq Wg_i(e)=We=W\subseteq U.$$
\end{proof}

\begin{proposition}
If a Hausdorff topological semigroup $Y$ contains a dense polybounded subgroup $X$, then $Y$ is a topological group.
\end{proposition}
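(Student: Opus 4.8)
The plan is to first upgrade $Y$ from a monoid to a group and then invoke Proposition~\ref{p:topg} to get continuity of inversion. I would begin by writing the polyboundedness of the group $X$ as $X=\bigcup_{i=1}^n f_i^{-1}(b_i)$ for semigroup polynomials $f_1,\dots,f_n$ on $X$ and elements $b_1,\dots,b_n\in X$. Since $X$ is a group with neutral element $e$, I may take all coefficients of the $f_i$ in $X$ (replacing the formal unit $1$ by $e$), and, exactly as in the proof of Proposition~\ref{p:topg}, rewrite each equation $f_i(x)=b_i$ in the equivalent form $\tilde f_i(x)=e$, where $\tilde f_i(x)=x\,g_i(x)$ for a semigroup polynomial or constant map $g_i$ whose coefficients lie in $X\subseteq Y$. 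In this way the $\tilde f_i$ are genuine semigroup polynomials on $Y$, continuous by continuity of the operation, and $X\subseteq\bigcup_{i=1}^n\tilde f_i^{-1}(e)$.

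First I would check that $e$ is the neutral element of $Y$. The left shift $y\mapsto ey$ and the identity map of $Y$ are continuous and agree on the dense subset $X$; since $Y$ is Hausdorff, they agree on all of $Y$, so $ey=y$ for every $y\in Y$, and symmetrically $ye=y$. Hence $Y$ is a monoid with identity $e$.

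Next I would transfer polyboundedness from $X$ to $Y$. As each $\tilde f_i$ is continuous and $\{e\}$ is closed in the $T_1$ space $Y$, the set $\bigcup_{i=1}^n\tilde f_i^{-1}(e)$ is closed; containing the dense set $X$, it equals $Y$. Thus $Y=\bigcup_{i=1}^n\tilde f_i^{-1}(e)$, which simultaneously shows that $Y$ is polybounded in $Y$ and that every $y\in Y$ admits a right inverse, namely $y\,g_i(y)=e$ for a suitable $i$. A monoid in which every element has a right inverse is a group: if $yy'=e$ and $y'y''=e$, then $y=ye=y(y'y'')=(yy')y''=ey''=y''$, whence $y'y=y'y''=e$, so $y'$ is a two-sided inverse of $y$. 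Therefore $Y$ is a group, and being a Hausdorff topological semigroup it is a $T_1$ paratopological group.

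Finally, $Y$ is a $T_1$ paratopological group that is polybounded in itself, so Proposition~\ref{p:topg} applied with $G=H=Y$ yields that inversion on $Y$ is continuous; that is, $Y$ is a topological group. The step I expect to require the most care is the transfer: one must verify that the representation of the polyboundedness of $X$ can be rewritten using only coefficients from $X$ and the single right-hand value $e$, so that the resulting polynomials extend to $Y$ and their common zero set is closed. Once this is arranged, the density of $X$ forces the identity $Y=\bigcup_i\tilde f_i^{-1}(e)$ to hold on all of $Y$, and the remaining algebra and the appeal to Proposition~\ref{p:topg} are routine.
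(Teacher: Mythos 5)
Your proof is correct and follows essentially the same route as the paper's: identify $e$ as the identity of $Y$ via density and Hausdorffness, rewrite the polybounded representation as $x\,g_i(x)=e$ using the trick from Proposition~\ref{p:topg}, transfer it to $Y$ by closedness of the polynomial equations, and finish with Proposition~\ref{p:topg}. The only cosmetic difference is that the paper encodes the two-sided condition $y\bar g_i(y)=e=\bar g_i(y)y$ directly into the closed set (using $g_i(x)=x^{-1}$ on $X$), whereas you obtain only right inverses and then invoke the standard fact that a monoid with right inverses is a group; both are fine.
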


\begin{proof} Let $e$ be the identity of the group $X$. The density of $X$ in $Y$ and the Hausdorff property of $Y$ implies that the (closed) set $\{y\in Y:ye=y=ey\}$ coincides with $Y=\overline{X}$, which means that $e$ is the identity of the semigroup $Y$.

Since the group $X$ is polybounded, there are elements $b_1,\dots,b_n\in X$ and semigroup polynomials $f_1,\dots,f_n$ on $X$ such that $$X=\bigcup_{i=1}^n f_i^{-1}(b_i).$$ Using the same trick as in the proof of Proposition~\ref{p:topg}, we can assume that each polynomial $f_i(x)$ is of the form $xg_i(x)$ where $g_i$ is a semigroup polynomial or a constant self-map of $X$ and $b_i=e$ for every $i\leq n$. Then for every $x\in X$ there exists $i\in\{1,\dots,n\}$ such that $x g_i(x)=e$, which means that $g_i(x)=x^{-1}$ and $xg_i(x)=e=g_i(x)x$.

The function $g_i:X\to X$ is of the form $g_i(x)=a_{i,0}x\dots x a_{i,n_i}$ for some $n_i\in\w$ and $a_{i,0},\dots, a_{i,n_i}\in X$. Let $\bar g_i:Y\to Y$ be the continuous function on $Y$ defined by $\bar g_i(y)=a_{i,0}y\dots ya_{i,n_i}$ for $y\in Y$. It follows that the set $\bigcup_{i=1}^n\{y\in Y:y\bar g_i(y)=e=\bar g_i(y)y\}$ coincides with $Y$, because it is closed and contains a dense subset $X$ in $Y$. Consequently, the semigroup $Y$ is polybounded and for every $y\in Y$ there exists an element $z\in \{\bar g_i(y):i\leq n\}\subset Y$ such that $yz=e=zy$, which means that $Y$ is a group. By Proposition~\ref{p:topg}, $Y$ is a topological group.
\end{proof}

Conditions implying that a cancellative topological semigroup $S$ is a topological group is another widely studied topic in Topological Algebra (see,~\cite{BG, HH, RS}). The following corollary of Theorem~\ref{t:group} and Proposition~\ref{p:topg} contributes to this field.



\begin{corollary}
Every nonempty polybounded cancellative $T_1$ topological semigroup is a topological group.
\end{corollary}




\section*{Acknowledgements}
The authors express their sincere thanks to the referee for many valuable suggestion which helped the authors to improve essentially the final version of the paper. 

\end{document}